\theoremstyle{plain}
\newtheorem{theorem}{Theorem}
\newtheorem{definition}{Definition}[section]
\newtheorem{assumption}[definition]{Assumption}
\newtheorem{lemma}[definition]{Lemma}
\newtheorem{corollary}[definition]{Corollary}
\newtheorem{proposition}[definition]{Proposition}
\newtheorem{remark}[definition]{Remark}
\def\refeq#1{~\eqref{#1}}
\def\longformule#1#2{
\displaylines{ \qquad{#1} \hfill\cr \hfill {#2} \qquad\cr } }
\def\sumetage#1#2{
\sum_{\scriptstyle {#1}\atop\scriptstyle {#2}} } \font\tenms=msbm10
 \font\tenms=msbm10
\font\sevenms=msbm7 \font\fivems=msbm5
\def\build#1_#2^#3{\mathrel{
\mathop{\kern 0pt#1}\limits_{#2}^{#3}}}
\newcommand{\ds}{\displaystyle}
\newcommand{\beq}{\begin{eqnarray}}
\newcommand{\eeq}{\end{eqnarray}}
\newcommand{\bq}{\begin{equation}}
\newcommand{\eq}{\end{equation}}
\newcommand{\beqn}{\begin{eqnarray*}}
\newcommand{\eeqn}{\end{eqnarray*}}
\let\al=\alpha
\let\lam=\lambda
\let\wt=\widetilde
\def\DD{\mathop{\bf D\kern 0pt}\nolimits}
\def\SS{\mathop{\bf S\kern 0pt}\nolimits}
\def\ZZ{\mathop{\bf Z\kern 0pt}\nolimits}
\def\TT{\mathop{\bf T\kern 0pt}\nolimits}
\def\virgp{\raise 2pt\hbox{,}}
\def\cdotpv{\raise 1pt\hbox{ ;}}
\def\beq{\begin{equation}}
\def\eeq{\end{equation}}
\def\cdotv{\raise 2pt\hbox{,}}
\def\H{\mathop{\mathbb H\kern 0pt}\nolimits}
\def\R{\mathop{\mathbb R\kern 0pt}\nolimits}
\def\N{{\mathbf N}}
\def\virgp{\raise 2pt\hbox{,}}
\def\({\left(}
\def\){\right)}
\def\<{\left\langle}
\def\>{\right\rangle}
\def\ge{\geqslant}
\numberwithin{equation}{section}
\begin{document}
\title[Dispersive estimates and Schr\"odinger operators on step 2 Stratified Lie Groups]{Dispersive estimates for the Schr\"odinger operator on step 2 Stratified Lie groups}
\author[H. Bahouri]{Hajer Bahouri}
\address[H. Bahouri]{LAMA UMR CNRS 8050,
Universit\'e Paris EST\\ 61, avenue du G\'en\'eral de Gaulle\\
94010 Cr\'eteil Cedex\\ France }
\email{hbahouri@math.cnrs.fr}
\author[C. Fermanian]{Clotilde~Fermanian-Kammerer}
\address[C. Fermanian]{LAMA UMR CNRS 8050,
Universit\'e Paris EST\\ 61, avenue du G\'en\'eral de Gaulle\\
94010 Cr\'eteil Cedex\\ France}
\email{clotilde.fermanian@u-pec.fr}
\author[I. Gallagher]{Isabelle Gallagher}\address[I. Gallagher]%
{Institut de Math{\'e}matiques UMR 7586 \\
      Universit{\'e} Paris Diderot (Paris 7) \\
 B\^atiment Sophie Germain  \\
    Case 7012\\
 75205 PARIS Cedex 13     \\
      France}
\email{gallagher@math.univ-paris-diderot.fr}

\begin{abstract}

The present paper is dedicated to the proof of
dispersive estimates on stratified Lie groups of step 2, for the linear 
   Schr\"odinger equation involving   a  sublaplacian. 
   It turns out that the propagator  behaves like a wave operator   on a space of the  same dimension~$p$ as  the center of the group, and like a Schr\"odinger operator on a space of the same dimension~$k$ as  the radical of the canonical skew-symmetric form, which suggests a decay rate $|t|^{-{k+p-1\over 2}}$.  
   In this article, we identify 
     a property of the canonical skew-symmetric form under which we establish optimal dispersive estimates with this rate.  The relevance of this property is discussed through several examples.
   \end{abstract}
\thanks{}
\maketitle


\section{Introduction}\label{intro}
\subsection{Dispersive inequalities}
\label{sec:up}

Dispersive inequalities for evolution  equations (such as Schr\"odinger and wave equations) play a decisive role in the study of  semilinear and quasilinear problems which appear in numerous physical applications. Proving dispersion   amounts to establishing a decay estimate for the $L^\infty$ norm  of the solutions of these equations at time $t$ in terms of some negative power of $t$ and the~$L^1$ norm  of the data.  In many cases, the main step in the proof of this decay in time relies on the application of a stationary  phase theorem on an (approximate) representation of the solution.
Combined with an abstract functional analysis argument known as the $TT^*$-argument, dispersion phenomena yield a range of estimates involving space-time Lebesgue norms.   Those inequalities, called Strichartz estimates, have proved to be powerful in the study of nonlinear equations (for instance one can consult \cite{bcd} and the references therein).

\bigskip

\noindent In the $\R^d$ framework, dispersive inequalities have a long history beginning with the articles of Brenner~\cite{brenner1}, Pecher  \cite{P1},  Segal \cite{segal} and Strichartz \cite{strichartz}.  They were subsequently developed by various authors, starting with the paper of Ginibre and Velo~\cite{ginibrevelo} (for a detailed bibliography, we refer to~\cite{keeltao, tao}   and the references therein).
In \cite{bgx}, the authors generalize  the dispersive estimates for the wave equation to the Heisenberg group $\H^d$ with an optimal rate of decay of order $ | t   |^{- 1/2}$ (regardless of the dimension~$d$)
and prove    that no dispersion    occurs for the Schr\"odinger  equation.  In \cite{hiero},  optimal results are proved for the time behavior of the Schr\"odinger and wave equations on   H-type groups: if~$p$ is   the dimension of the center of the H-type group, the author establishes sharp dispersive inequalities for the wave equation solution (with a decay rate of~$ | t   |^{- p/2}$) as well as for the Schr\"odinger  equation solution (with a~$ | t   |^{-(p-1)/2}$ decay). Compared with the $\R^d$ framework, there is an  exchange in the rates of decay between the wave and the Schr\"odinger equations.

\bigskip

\noindent   Strichartz estimates in  other settings   have been obtained in a number of works. One can first cite   various results dealing  with variable coefficient   operators (see  for instance \cite{kapitanski, smith1}) or studies concerning  domains such as \cite{blp,ilp,ss}. One can also refer to  the result concerning  the full Laplacian on the Heisenberg group in  \cite{furioli2},    works in the framework of the real hyperbolic spaces  in   \cite{AP, banica, tataru}, or in the framework of compact and noncompact manifolds in \cite{A, bd,bgt}; finally one can mention   the quasilinear framework studied in  \cite{bch, bch2, kr,st}, and the references therein. 

\bigskip

\noindent In this paper our   goal is to establish optimal dispersive estimates for the solutions of the Schr\"odinger equation on   {$2$-step  stratified}  Lie groups.  We shall emphasize in particular the  key  role played by the canonical skew-symmetric form   in determining the rate of decay of the solutions.  It turns out that the Schr\"odinger  propagator on~$G$ behaves like a wave operator   on a space of the same dimension as the center of  $G$, and like a Schr\"odinger operator on a space of the same dimension as the radical of the canonical skew-symmetric form associated with the dual of the center. This unusual behavior of the Schr\"odinger  propagator in the case of Lie algebras whose canonical skew-symmetric form is degenerate (known as Lie algebras which are not 
MW, see~\cite{MooreWolf},~\cite{MR} for example)  makes the analysis of  the  explicit representations of the solutions  tricky  and gives rise to uncommon dispersive estimates. It will also appear from our analysis that the optimal rate of decay is not always in accordance with the dimension of the center: we shall exhibit examples of   $2$-step  stratified  Lie groups with center of any dimension   and for which no dispersion    occurs for the Schr\"odinger  equation. We shall actually highlight that the optimal rate of decay in the dispersive estimates for solutions  to the Schr\"odinger equation is rather related to the properties of the canonical skew-symmetric form.

\bigskip

\subsection{Stratified Lie groups}\label{gradedliegroup}
   Let us recall here some basic facts about  stratified Lie groups (see~\cite{corwingreenleaf,folland, follandstein,
   steinweiss}  and the references therein for further details).   A connected, simply connected nilpotent Lie group $G$ is
  said stratified if its left-invariant Lie algebra~${\mathfrak g}$ (assumed  real-valued and  of finite dimension~$n$)    is  endowed with a vector space decomposition
  $$
  \displaystyle
  {\mathfrak g}=   \oplus_{1\leq k\leq \infty} \, {\mathfrak g}_k \, ,
  $$
 where all but finitely many of the ${\mathfrak g}_k'$s are $\{0\}$,   such that $[{\mathfrak g}_1,{\mathfrak g}_{k}]= {\mathfrak g}_{k+1}$. 
 If there are~$p$ non zero~${\mathfrak g}_k$ then the group is said of step~$p$.
 Via the exponential map  
 $$
 {\rm exp} :  {\mathfrak g} \rightarrow G 
 $$ which is in that case a diffeomorphism from ${\mathfrak g}$ to $G$, one identifies $G$ and ${\mathfrak g}$. It turns out that under this identification, the  group law on $G$ (which is generally not commutative) provided by the Campbell-Baker-Hausdorff formula, $(x,y)  \mapsto x \cdot y  $ is a polynomial map.  In the following we shall denote by~$\mathfrak z$ the center of~$G$ which is simply the last non zero~${\mathfrak g}_k$ and write 
 \begin{equation}\label{eq:Gdec}
  G =  \mathfrak v \oplus \mathfrak z \, ,
  \end{equation}
 where $\mathfrak v$ is any subspace of $G$ complementary to $\mathfrak z$.

    \medskip

 \noindent  The group $G$ is endowed with a smooth left invariant measure $\mu(x)$, the  Haar measure, induced by the Lebesgue measure on ${\mathfrak g}$ and  which satisfies the fundamental translation invariance property:
  $$
  \forall f  \in L^1(G, d\mu) \, ,  \quad  \forall x  \in G \,, \quad \int_G f(y) d\mu(y)  = \int_G f(x \cdot y)d\mu(y) \, .
   $$
Note that  the convolution of two functions $f$ and $g$ on $G$ is given by
    \begin{equation}
\label{convolutiondef}
  f*g(x) :=  \int_G f(x \cdot y^{-1})g(y)d\mu(y) = \int_G f(y)g(y^{-1} \!   \cdot   x)d\mu(y)
   \end{equation}
  and  as in the euclidean case we define   Lebesgue spaces by
$$
 \|f\|_{L^p (G)}  := \left( \int_G |f(y)|^p \: d \mu (y) \right)^\frac1p \, ,
 $$
 for $p\in[1,\infty[$, with the standard modification when~$p=\infty$.\\

 \noindent    Since $G$ is stratified,   there is a natural family of dilations on ${\mathfrak g}$ defined for $t>0$ as follows: if~$X$ belongs to~$ {\mathfrak g}$, we can decompose~$X$ as~$\displaystyle X=\sum X_k$ with~$\displaystyle  X_k\in {\mathfrak g}_k$, and then
   $$
   \delta_t X:=\sum t^{k} X_k \, .
   $$
 This allows to  define the dilation $\delta_t $ on the Lie group $G$ via the identification by the exponential map:
 $$\begin{array}{ccccc}
& {\mathfrak g} &\build{\rightarrow}_{}^{\delta_t} & {\mathfrak g}&\\
 {\small\rm exp}&  \downarrow& & \downarrow& {\small\rm exp}\\
  &G &\build{\rightarrow}_{ {\rm exp}\, \circ\,  \delta_t \, \circ\,  {\rm exp}^{-1}}^{}&G
  \end{array}$$
 To avoid heaviness,   we shall still denote by $\delta_t$ the map ${\rm exp}\, \circ \delta_t \, \circ {\rm exp}^{-1}$.\\

 \noindent Observe  that the action of the left invariant vector fields $X_k$, for~$X_k$ belonging to~${\mathfrak g}_k$, changes the homogeneity in the following way:
 $$
 X_k (f \circ \delta_t) = t^{k} X_k (f )\circ \delta_t \, ,
 $$
 where by definition $\displaystyle X_k (f ) (y) := \frac d {ds} f \bigl(y \cdot {\rm exp}  (s X_k)\bigr)_{|s=0}$ and   the Jacobian of the dilation $\delta_t$ is $t^Q$ where~$\displaystyle{Q:=\sum_{1\leq k\leq \infty} k\, {\rm dim} \, {\frak g}_k}$ is called the homogeneous dimension of $G$:
   \begin{equation}\label{homogenedim} \int_G f(\delta_t\,y) \,d\mu(y)  =  t^{-Q}\,\int_G f( y)\,d\mu(y) \, .
   \end{equation}
\noindent Let us also point out that there is  a natural norm~$\rho$ on~$G$ which is homogeneous in the sense
that it respects  dilations: $G\ni x\mapsto \rho(x)$ satisfies
$$
\forall x\in G,\;\;\rho(x^{-1})=\rho(x) \, ,\;\;\rho(\delta_tx)=t\rho(x) \, , \;{\rm and}\;\;\rho(x)=0\;\Longleftrightarrow\; x=0 \, .
$$
We can   define the Schwartz space~${\mathcal S}(G)$   as the set of smooth functions on~$G$ such that
 for all~$ \alpha$ in~${\mathbb N}^d$,  for all~$p$ in~${\mathbb N}, x\mapsto \rho(x)^p   {\mathcal X}^{\alpha}f(x) $ belongs to~$ L^\infty(G),
  $ where~${\mathcal X}^{\alpha}$ denotes a product of $|\alpha|$  left invariant vector fields. The Schwartz space~${\mathcal S}(G)$ has properties very similar to those of the Schwartz space~${\mathcal S}(\R^d)$, particularly density in Lebesgue spaces.
 
\subsection{The Fourier transform}\label{Fourier}  
The group $G$ being non commutative, its Fourier transform is defined by means of  irreducible unitary representations.  We devote this section to the introduction of  the basic concepts that will be needed in the sequel. From now on, we assume that $G$ is a step~2 stratified Lie group, meaning~$ \mathfrak z =  \mathfrak g_2$,  and we denote 
${\mathfrak v}={\mathfrak g}_1$ in~(\ref{eq:Gdec}).  We choose a scalar product on~${\mathfrak g}$ such that~${\mathfrak v}$ and~${\mathfrak z}$ are orthogonal.

\subsubsection{Irreducible unitary representations}\label{defirreducible} Let us fix some notation,   borrowed from~\cite{crs} (see also~\cite{corwingreenleaf} or~\cite{MR}).  For any~$\lambda \in  \mathfrak z^\star$ (the dual of the center~$  \mathfrak z$) we define a skew-symmetric bilinear form on $\mathfrak v$ by 
\begin{equation}\label{skw}
\forall \, U,V \in  \mathfrak v \, , \quad B(\lambda) (U,V):= \lambda([U,V]) \, .
 \end{equation}
 One can find a   Zariski-open subset~$\Lambda$ of~$  \mathfrak z^\star$ such that
 the number of distinct eigenvalues of~$B(\lambda)$ is maximum. We denote by~$k$ the dimension of   the radical $\mathfrak r_\lambda$ of~$ B(\lambda)$. Since~$ B(\lambda)$ is skew-symmetric, the dimension of the orthogonal complement of $\mathfrak r_\lambda$ in $\mathfrak v$ is an even number which we shall denote by~$2d$. Therefore, there exists an orthonormal basis
  $$\big (P_1(\lambda) , \dots ,P_d(\lambda),  Q_1(\lambda) , \dots ,Q_d(\lambda),R_1(\lambda),\dots,R_k(\lambda)\big )$$ 
  such that the  matrix of~$B(\lambda)$ takes the following form
  $$
\left(
\begin{array}{ccccccccc}
0 &\dots & 0& \eta_1(\lambda)& \dots& 0 & 0 & \cdots & 0 \\
\vdots &\ddots & \vdots& \vdots& \ddots& \vdots & \vdots & \ddots& \vdots \\\
0 &\dots & 0&0& \dots&  \eta_d(\lambda) & 0 & \cdots & 0\\
- \eta_1(\lambda) &\dots & 0&0& \dots& 0& 0 & \cdots & 0\\
\vdots &\ddots & \vdots& \vdots& \ddots& \vdots&\vdots& \ddots& \vdots\\
0 &\dots & - \eta_d(\lambda)&0& \dots& 0& 0 & \cdots & 0\\
0 & \dots & 0 & 0 & \dots & 0& 0 & \cdots & 0 \\
\vdots &\ddots & \vdots& \vdots& \ddots& \vdots&\vdots& \ddots& \vdots\\
0 &\dots &0&0& \dots& 0& 0 & \cdots & 0
\end{array}
\right)
\, ,$$
where each~$\eta_j(\lambda)>0 $ is smooth and   homogeneous   of degree one in~$\lambda = (\lambda_1,\dots, \lambda_p)$  and the basis vectors  are chosen to depend smoothly on~$\lambda$ in~$\Lambda$. 
Decomposing~$ \mathfrak v$ as
$$
 \mathfrak v = \mathfrak p_\lambda +  \mathfrak q_\lambda  +\mathfrak r_\lambda $$
 with 
 $$
 \begin{aligned}
  \mathfrak p_\lambda:= \mbox{Span} \, \big (P_1(\lambda) , \dots ,P_d(\lambda) \big) \, , & \quad \mathfrak q_\lambda:= \mbox{Span} \, \big (Q_1(\lambda) , \dots ,Q_d(\lambda)\, ,\quad  \mathfrak r_\lambda := &\mbox{Span} \, \big (R_1(\lambda) , \dots ,R_k(\lambda) \big)
   \end{aligned}
$$
any element $V \in \mathfrak v $ will be written in the following as~$P+Q+R$ with~$P\in  \mathfrak p_\lambda$, $Q \in  \mathfrak q_\lambda$ 
and~$R\in\mathfrak r_\lambda$.
We then introduce irreducible unitary representations of~$G$ on~$L^2( \mathfrak p_\lambda)$:
\begin{equation}\label{defpilambda}
u^{\lambda,\nu}_{X} \phi(\xi) := {\rm e}^{-i\nu( R)-i\lambda (Z+ [ \xi +\frac12 P , Q])} \phi (P+ \xi) \, , \;\lambda\in\mathfrak z^*,\;\nu\in\mathfrak r^*_\lambda \, , 
\end{equation}
for any~$x=\exp (X)\in G$ with~$X = X(\lambda,x) :=\big (P(\lambda,x),Q(\lambda,x),R(\lambda,x),Z(x) \big)$ and~$\phi \in L^2( \mathfrak p_\lambda)$. In order to shorten notation, we shall omit the dependence on~$(\lambda,x)$ whenever there is no risk of confusion.

\subsubsection{The Fourier transform}  In contrast with the euclidean case, the Fourier transform  is defined on the bundle~$\mathfrak r(\Lambda)$ above~$\Lambda$ whose fibre above $\lambda\in\Lambda$ is $\mathfrak r^* _\lambda\sim \R^k$. It is valued  in   the space of  bounded operators
on~$L^2( \mathfrak p_\lambda)$. More precisely, the Fourier transform of a function~$f$ in~$L^1(G)$ is  defined  as follows: for any~$(\lambda,\nu) \in\mathfrak r(\Lambda)$ $$
{\mathcal F}(f)(\lambda,\nu):=
\int_G f(x) u^{\lambda,\nu}_{X(\lambda,x) } \, d\mu(x) \, .
$$
Note that    for any~$(\lam,\nu)$, the map~$u^{\lambda,\nu}_{X(\lambda,x)}$
 is a group homomorphism from~$G$ into the group~$U (L^2( \mathfrak p_\lambda))$  of unitary operators
of~$L^2( \mathfrak p_\lambda)$, so functions~$f$ of~$L^1(G)$  have a Fourier transform~$\left({\mathcal F}(f)(\lambda,\nu)\right)_{\lambda,\nu}$ which is a bounded family of bounded operators on~$L^2( \mathfrak p_\lambda)$. One may check that  the Fourier transform   exchanges   convolution, whose definition is recalled in~(\ref{convolutiondef}),
and   composition:
\begin{equation}\label{fourconv}
 {\mathcal F}( f \star g )( \lam,\nu ) = {\mathcal F}(f) ( \lam,\nu )\circ{\mathcal F} (g)( \lam,\nu ) \, .
 \end{equation}
Besides, the Fourier transform can be extended to an isometry from~$L^2(G)$ onto the Hilbert
space of two-parameter families~$ A  = \{ A (\lam,\nu ) \}_{(\lambda,\nu) \in\mathfrak r(\Lambda)}$
 of operators on~$L^2( \mathfrak p_\lambda)$ which are
Hilbert-Schmidt for almost every~$(\lambda,\nu) \in\mathfrak r(\Lambda)$, with~$\|A (\lam,\nu )\|_{HS (L^2( \mathfrak p_\lambda))}$ measurable and with norm
\[ \|A\| := \left( \int_{\lambda\in\Lambda}\int_{\nu\in\mathfrak r^*_\lambda}
\|A (\lam,\nu )\|_{HS (L^2( \mathfrak p_\lambda))}^2 |{\mbox {Pf}} (\lambda) |d\nu \, d\lam
\right)^{\frac{1}{2}}<\infty  \, ,\]
 where~$|{\mbox {Pf}} (\lambda) | := \prod_{j=1}^d \eta_j(\lambda) \, $ is the Pfaffian of~$B(\lambda)$. 
  We have the following Fourier-Plancherel formula: there exists a constant $\kappa>0$ such that 
 \begin{equation}
\label{Plancherelformula} \int_G  |f(x)|^2  \, dx
=  \kappa \, \int_{\lambda\in\Lambda}\int_{\nu\in\mathfrak r^*_\lambda} \|{\mathcal F}(f)(\lambda,\nu)\|_{HS(L^2( \mathfrak p_\lambda))}^2 |{\mbox {Pf}} (\lambda) | \, d\nu\, d\lambda   \,.
\end{equation}
Finally, we have an inversion formula as stated in the following proposition which is proved in the Appendix page~\pageref{appendixinversion}.

\begin{proposition}\label{inversioninS} There exists $\kappa>0$ such that for~$ f \in {\mathcal S}(G)$ and for almost all~$x \in G$ the following inversion formula holds:
\begin{equation}
\label{inversionformula} f(x)
= \kappa \, \int_{\lambda\in\Lambda}\int_{\nu\in\mathfrak r^*_\lambda} {\rm{tr}} \, \Big((u^{\lambda,\nu}_{X(\lambda,x)})^\star {\mathcal F}f(\lambda,\nu)  \Big)\, |{\mbox {Pf}} (\lambda) |\, d\nu\,d\lambda \,.\end{equation}
 \end{proposition}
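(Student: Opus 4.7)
The plan is to deduce the inversion formula from the Plancherel identity~(\ref{Plancherelformula}) by polarization, combined with a reduction to evaluation at the identity~$e$ of~$G$. For any~$x\in G$ and~$f\in {\mathcal S}(G)$, I introduce the left translate~$L_xf(y):=f(x\cdot y)$. The left-invariance of the Haar measure, together with the homomorphism property of~$y\mapsto u^{\lambda,\nu}_{X(\lambda,y)}$ (viewed as a map from~$G$ into the unitary operators on~$L^2({\mathfrak p}_\lambda)$), then gives
\begin{equation*}
{\mathcal F}(L_xf)(\lambda,\nu)=\bigl(u^{\lambda,\nu}_{X(\lambda,x)}\bigr)^\star\, {\mathcal F}f(\lambda,\nu),
\end{equation*}
after the change of variable~$z=x\cdot y$ and the identity~$u^{\lambda,\nu}_{X(\lambda,x^{-1}\cdot z)}=(u^{\lambda,\nu}_{X(\lambda,x)})^\star u^{\lambda,\nu}_{X(\lambda,z)}$. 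Combined with~$(L_xf)(e)=f(x)$, this reduces~(\ref{inversionformula}) to the single identity
\begin{equation*}
g(e)=\kappa\int_{\Lambda}\int_{{\mathfrak r}^*_\lambda}{\rm tr}\bigl({\mathcal F}g(\lambda,\nu)\bigr)\,|{\mbox{Pf}}(\lambda)|\,d\nu\,d\lambda \qquad \text{for every }g\in {\mathcal S}(G).
\end{equation*}

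To prove this identity, I would first invoke a Dixmier--Malliavin type factorization to write~$g$ as a finite sum of convolutions~$g_1*g_2$ with~$g_i\in {\mathcal S}(G)$; by~(\ref{fourconv}), ${\mathcal F}g={\mathcal F}g_1\circ {\mathcal F}g_2$ is then a composition of Hilbert--Schmidt operators (each controlled by Plancherel), hence trace class with trace norm bounded by~$\|{\mathcal F}g_1\|_{HS}\|{\mathcal F}g_2\|_{HS}$; this makes the trace integrand absolutely integrable with respect to~$|{\mbox{Pf}}(\lambda)|\,d\nu\,d\lambda$. I would then introduce the involution~$\widetilde{h}(y):=\overline{h(y^{-1})}$, which in view of~$(u^{\lambda,\nu}_{X(\lambda,y)})^\star=u^{\lambda,\nu}_{X(\lambda,y^{-1})}$ satisfies~${\mathcal F}(\widetilde{h})=({\mathcal F}h)^\star$. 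Since~$G$ is unimodular, a direct computation yields
\begin{equation*}
(g_1*g_2)(e)=\int_G g_1(y^{-1})g_2(y)\,d\mu(y)=\bigl(g_2,\widetilde{g_1}\bigr)_{L^2(G)},
\end{equation*}
and polarizing~(\ref{Plancherelformula}), followed by cyclicity of the trace, gives
\begin{equation*}
\bigl(g_2,\widetilde{g_1}\bigr)_{L^2(G)}=\kappa\int {\rm tr}\bigl({\mathcal F}g_2\cdot{\mathcal F}g_1\bigr)\,|{\mbox{Pf}}(\lambda)|\,d\nu\,d\lambda=\kappa\int {\rm tr}\bigl({\mathcal F}(g_1*g_2)\bigr)\,|{\mbox{Pf}}(\lambda)|\,d\nu\,d\lambda,
\end{equation*}
which closes the argument after summing over the factorization of~$g$.

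The main obstacle will be the trace-class/integrability issue: without additional structure, ${\mathcal F}f(\lambda,\nu)$ is only Hilbert--Schmidt, so the naive expression~${\rm tr}({\mathcal F}f(\lambda,\nu))$ is a priori ill-defined. The convolution factorization is precisely what turns the formal integrand into a legitimate trace pairing and unlocks the polarized Plancherel identity. A natural alternative would be to approximate~$f$ by~$f*\phi_\eps$ with a smoothing kernel and pass to the limit inside the trace integral, but justifying that limit essentially requires the same factorization idea.
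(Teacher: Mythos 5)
Your reduction to evaluation at the identity via left translation and the homomorphism property is exactly the paper's first step (the paper uses $g(x')=f(x\cdot x')$ and the identity ${\mathcal F}(g)=u^{\lambda,\nu}_{X(\lambda,x^{-1})}\circ{\mathcal F}(f)$). From that point on, however, your route is genuinely different. The paper proves the identity at the origin by brute force: it expands the trace in the Hermite basis $(h_{\alpha,\eta(\lambda)})_\alpha$, justifies the interchange of sum and integrals by an explicit summability estimate
$\sum_{\alpha}\int\|{\mathcal F}(f)(\lambda,\nu)h_{\alpha,\eta(\lambda)}\|\,|{\mbox{Pf}}(\lambda)|\,d\nu\,d\lambda<\infty$, obtained by writing $f=(\mbox{Id}-\Delta_G)^{-M}(\mbox{Id}-\Delta_G)^Mf$ on the Fourier side and exploiting the lower bound $\zeta(\alpha,\lambda)\gtrsim m|\lambda|$, and then reduces, after several changes of variables and the completeness of the Hermite basis, to three iterated Euclidean Fourier inversions in $\xi$, $\nu$ and $\lambda$. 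You instead argue softly: Dixmier--Malliavin factorization, $(g_1*g_2)(e)=(g_2,\widetilde{g_1})_{L^2}$, polarized Plancherel, and cyclicity of the trace. Your individual identities all check out (${\mathcal F}(\widetilde h)=({\mathcal F}h)^\star$ uses unimodularity, which holds here; the constant you obtain is the Plancherel constant, which is consistent with the statement). What each approach buys: yours is shorter and conceptually cleaner, and it isolates the real issue --- trace-class-ness of ${\mathcal F}f(\lambda,\nu)$ --- resolving it by writing the trace as a pairing of two Hilbert--Schmidt operators; the paper's computation is longer but entirely self-contained and in particular \emph{proves} the trace-class property directly through the sublaplacian estimate, rather than outsourcing it.

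The one soft spot in your argument is precisely that outsourcing: the factorization of ${\mathcal S}(G)$ as finite sums of convolutions of Schwartz functions is true for nilpotent Lie groups but is a substantial theorem in its own right (the original Dixmier--Malliavin statement concerns $C^\infty_c$ and smooth vectors, and the Schwartz-space version for nilpotent groups requires a separate argument). If you want to avoid that citation, note that the paper's estimate on $\sum_\alpha\|{\mathcal F}f\,h_{\alpha,\eta(\lambda)}\|$ already shows ${\mathcal F}f(\lambda,\nu)$ is trace class with integrable trace norm; with that in hand your alternative suggestion (approximating by $f*\phi_\eps$ with $\phi_\eps\geq0$, $\int\phi_\eps=1$, so that $\|{\mathcal F}\phi_\eps\|_{op}\leq1$ and dominated convergence applies) goes through and no factorization is needed. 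As written, your proof is correct modulo that external input.
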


\subsubsection{The sublaplacian} \label{freq}
 Let~$(V_1,\dots,V_{m})$ be an orthonormal basis of ${\mathfrak g}_1$,  then the sublaplacian on $G$ is defined by
\begin{equation}
\label{DeltaG}
  \Delta_{G}:= \sum_{j = 1}^{m } V_j^2.
 \end{equation}
It is a self-adjoint operator which is independent of the orthonormal basis ~$(V_1,\dots,V_{m})$, and  homogeneous of degree $2$ with respect to the dilations in the sense that : 
 $$
 \delta_{t}^{-1}\Delta_G \, \delta_t = t^2 \Delta_G \, .
 $$
To write its expression in Fourier space, we consider the basis of Hermite functions $(h_n)_{n\in\N}$,  normalized in~$L^2(\R) $ and satisfying
for all real numbers~$\xi $:
$$
h''_n(\xi)-\xi^2 h_n(\xi)= -(2n+1) h_n(\xi) \, .
$$  
Then, for any multi-index~$\alpha \in {\mathbb N}^d$,  we define the functions $h_{\alpha,\eta(\lambda)}$ by
\begin{equation}\label{defhn}
\begin{aligned}
 \forall \, \Xi = (\xi_1,\dots, \xi_d) \in \R^d \, , \quad h_{\alpha,\eta(\lambda)} (\Xi) & :=\prod_{j=1}^d h_{\alpha_j,\eta_j(\lambda)}(\xi_j) \quad \mbox{and} \\
 \forall (n,\beta) \in {\mathbb N}\times\R^+ \, , \forall \xi \in \R \, , \quad 
 h_{n,\beta } (\xi) &  :=  \beta^{\frac 1 4}  h_{n} \big( \beta^{\frac 1 2} \xi \big) \, .
\end{aligned}
\end{equation}
The sublaplacian~$\Delta_G$ defined in~(\ref{DeltaG})  satisfies
\begin{equation}\label{formulafourierdelta}
{\mathcal F}(- \Delta_G f) (\lambda,\nu) = {\mathcal F}(  f) (\lambda,\nu)  \left(H(\lambda)+|\nu|^2\right)\, , 
\end{equation}
where $|\nu|$ denotes the euclidean norm of the vector $\nu$ in $\R^k$ and~$ H(\lambda)$ is the  diagonal  operator  defined on 
$L^2(\R^d)$ by
$$
H(\lambda) h_{\alpha,\eta(\lambda)} =   \sum_{j =1}^d (2 \alpha_j + 1) \eta_j(\lambda) \,  h_{\alpha,\eta(\lambda)}\, .
$$
In  the following we shall denote the ``frequencies" associated with~$P_j^2(\lambda) + Q_j^2(\lambda) $ by
\begin{equation}\label{eq:freqxy}
\zeta_j (\alpha, \lambda) :=   (2 \alpha_j + 1) \eta_j(\lambda)  \, , \quad (\alpha, \lambda) \in {\mathbb N}^d \times \Lambda \, ,
\end{equation}
and those associated with~$H(\lambda)$ by 
\begin{equation}\label{eq:freq}
\zeta (\alpha, \lambda) :=  \sum_{j =1}^d \zeta_j (\alpha, \lambda)   \, , \quad (\alpha, \lambda) \in {\mathbb N}^d \times \Lambda \, .
\end{equation}
 Note that~$\Delta_G $ is directly related to the harmonic oscillator    via~$H(\lambda)$ since   eigenfunctions associated with the eigenvalues~$\zeta (\alpha, \lambda) $   are the products of 1-dimensional Hermite functions.  Also observe that~$\zeta (\alpha, \lambda)$ is smooth and   homogeneous   of degree one in~$\lambda = (\lambda_1,\dots, \lambda_p)$.  Moreover, $\zeta(\alpha,\lambda)=0$ if and only if $B(\lambda)=0$, or equivalently by~(\ref{skw}), $\lambda=0$.  %
\medskip

\noindent Notice  also that there is a difference in homogeneity in the variables $\lambda$ and $\nu$. Namely, in the variable~$\nu$, the sublaplacian acts as in the euclidean case (homogeneity $2$) while in $\lambda$, it has the homogeneity $1$ of a wave operator.

\medskip

\noindent
Finally, for any smooth function $\Phi$, we define
the operator $\Phi\left(-\Delta_{G}\right)$  by the formula
\begin{equation}\label{def}   {\mathcal  F} \big(\Phi(- \Delta_G) f\big)(\lam,\nu):=  \Phi(H(\lambda)+|\nu|^2) {\mathcal  F}  ( f )(\lam,\nu) \, ,\end{equation}
which also reads
$$
  {\mathcal  F} \big (\Phi(- \Delta_G) f\big)(\lam,\nu)  h_{\alpha,\eta(\lambda)} := 
  \Phi\left(|\nu|^2+\zeta(\alpha,\lambda)\right) {\mathcal F}(f)(\lam,\nu)
h_{\alpha,\eta(\lambda)} \,,
$$  for all $(\lambda,\nu)\in \mathfrak r(\Lambda)$  and $\alpha\in{\mathbb N}^d.$ \\

\subsubsection{Strict spectral localization} Let us introduce  the following notion of spectral localization, which we shall call strict  spectral  localization  and which will be very useful in the following. 
 \begin{definition}\label{def:strispecloc}
A  function $f$ belonging to  $L^1(G)$ is said to be strictly spectrally localized in a set~${\mathcal C}\subset \R$ if there exists a smooth function $\theta$,  compactly supported in ${\mathcal C}$, such that for all~$ 1\leq  j\leq d$,  
\begin{equation}\label{strictloceq}
{\mathcal F}(f)(\lambda,\nu)={\mathcal F}(f)(\lambda,\nu) \, \theta \big ((P_j^2+Q_j^2)(\lambda )    \big) \, ,\;\;\forall (\lambda,\nu)\in \mathfrak r(\Lambda) \, .
\end{equation}
\end{definition}
\begin{remark} {\rm    One could expect the notion of spectral localization to relate to the Laplacian instead of each individual vector field~$P_j^2+Q_j^2$, assuming rather the less restrictive condition 
$$
{\mathcal F}(f)(\lambda,\nu)={\mathcal F}(f)(\lambda,\nu) \, \theta \big (H(\lambda )    \big) \, ,\;\;\forall (\lambda,\nu)\in \mathfrak r(\Lambda) \, .
$$
The choice we make here is more restrictive due to the anisotropic context (namely the fact that~$\eta_j(\lambda)$ depends on~$j$):   in the case of the Heisenberg group or more generally  H-type groups, the notion of  ``strict spectral localization'' in a ring~${\mathcal C}$ of~$\R^p$ actually coincides with the more usual definition of ``spectral localization" since as recalled in the next paragraph~$\eta_j(\lambda) = 4 |\lambda|$ (for a
complete presentation and more details on spectrally localized functions, we refer the reader to~\cite{bg, bfg, bfg2}).  Assumption~(\ref{strictloceq})
 guarantees 
a lower bound, which roughly states that for~${\mathcal F}(f)(\lambda,\nu)  h_{\alpha,\lambda}$ to be non zero, then
\begin{equation}\label{lowerbound}
\forall j\in\{1,\dots,d\},\;\;
(2 \alpha_j + 1) \eta_j(\lambda) \geq c >0 \, , 
\end{equation}
hence each~$\eta_j$ must be bounded away from zero, rather than  the sum over~$j$. These lower bounds are important ingredients of the proof (see Section~\ref{prooflemmas}).  }
\end{remark}

\subsection{Examples}

   \medskip

   Let us give a few examples of well-known stratified Lie groups with a step 2 stratification. 
Note that   nilpotent  Lie groups which are connected, simply
connected and whose Lie algebra  admits a step 2 stratification are called {Carnot groups}.

\subsubsection{The Heisenberg group} The Heisenberg group $\H^d$  is defined as the space $\R^{2d+1}$ whose elements  can be written $w =
(x,y,s) $  with $(x,y) \in
   \R^{d} \times \R^{d} $, endowed with the following product law:
   \[
(x,y,s)\cdot (x',y',s') = (x+x',y+y',s+s'- 2(x\mid y') + 2(y\mid
x')),
\]
where~$(\cdot\mid \cdot)$ denotes the euclidean scalar product on~$\R^d$.  In that case the center consists in the points of the form $(0,0,s)$ and is of dimension 1.  The Lie algebra of left invariant vector fields  is
generated by
 $$
 X_j\! :=\!\partial_{x_j} + 2 y_j \partial_s \, ,\!\!\!\quad Y_{j} \!   :=\!
\partial_{y_j} - 2 x_j \partial_s\!\!\!\quad\hbox{with}\!\!\!\quad 1\leq j\leq d \, , \!\!\! \quad 
 \hbox{and}\!\!\!\quad
 S   := \partial_s=\frac 1 4[Y_j,X_j ] \, .
 $$
 The canonical skew-symmetric form~$B(\lambda)(U,V)$ defined in~(\ref{skw}) associated with the frequencies $\lambda\in\R^*$  is proportional to~$\lambda$, since~$[U,V]$ is proportional to~$\partial_s$.
 Its radical reduces to $\{0\}$ with $\Lambda=\R^*$ and~$|\eta_j(\lambda)|=4 \, |\lambda|$ for all $j\in\{1,\dots,d\}$. Note in particular that  strict spectral localization and spectral localization are equivalent.

\subsubsection{ H-type groups} These groups are  canonically isomorphic to $\mathbb R^{m+p}$, and are a multidimensional version of the Heisenberg group. The group law is of the form
\begin{equation*}
\quad \quad \quad (x^{(1)},x^{(2)}) \cdot (y^{(1)},y^{(2)}):=\begin{pmatrix}
x_j^{(1)}+y_j^{(1)},\,\,\,j=1,...,m \\x_j^{(2)}+y_j^{(2)}+\frac12 \langle x^{(1)}, U^{(j)}y^{(1)} \rangle,\,\,\,j=1,...,p
\end{pmatrix}
\end{equation*}
where $U^{(j)}$ are $m \times m$ linearly independent orthogonal skew-symmetric matrices satisfying the property
$$U^{(r)}U^{(s)}+U^{(s)}U^{(r)}=0$$
for every $r,s \in \left \{1,...,p \right \}$ with $r \neq s$.    In that case the center is of dimension~$p$ and may be identified with~$\R^p$ and the radical of the canonical skew-symmetric form associated with the frequencies $\lambda$ is again $\{0\}$.
 For example the Iwasawa subgroup of semi-simple Lie groups of split rank one (see~\cite{koranyi2}) is of this type. On  H-type groups, $m$ is an even number which we   denote by $2\ell$ and the Lie algebra of left invariant vector fields  
is spanned by the following vector fields, where we have written~$ z=(x,y) $ in~$ \R^{\ell} \times \R^{\ell}$: for~$
j$ running from~1 to~$\ell$ and~$k$ from~$ 1$ to~$p$,
 \[ 
 X_j\! :=\!\partial_{x_j} +  \frac 1 2 \sum^{p}_{k=1}\sum^{2\ell}_{l=1}z_l \, U_{l,j}^{(k)}\partial_{s_k} \, ,\!\!\!\quad Y_{j} \!:=\!
\partial_{y_j}  +\frac 1 2 \sum^{p}_{k=1}\sum^{2\ell}_{l=1}z_l \,U_{l,j+\ell}^{(k)}\partial_{s_k}
  \quad  \hbox{and}\, \quad \partial_{s_k} 
 .\]
In that case,  we have $\Lambda=\R^p\setminus\{0\}$ with  $\eta_j(\lambda)=\sqrt{\lambda^2_1+\cdots+\lambda^2_p}$ for all  $j \in \{1,\dots,\ell\}$ (here again, strict spectral localization and spectral localization are equivalent).
 
\subsubsection{Diamond groups} These groups  which occur in    crystal theory (for more details, consult \cite{Ludwig, Poguntke}),  are of the type~$\Sigma \ltimes \H^d $ where~$\Sigma$ is a connected Lie group acting smoothly on~$ \H^d $.    One can find examples for which the  radical of the canonical skew-symmetric is of any dimension $k$, $0\leq k\leq d$. For example, one can take for $\Sigma$ the~$k$-dimensional torus, acting on $\H^d$ by 
$$
\theta( w):=(\theta\cdot z,s):=\left({\rm e}^{i\theta_1}z_1,\dots,{\rm e}^{i\theta_k}z_k,z_{k+1},\dots,z_d,s\right),\;\;w=(z,s)
$$
where the element $\theta=(\theta_1,\dots,\theta_k )$ corresponds to the element $\left({\rm e}^{i\theta_1},\dots,{\rm e}^{i\theta_k}\right)$ of~${\mathbb T}^k$.
Then, the product law on $G={\mathbb T}^k\ltimes\H^d$ is given by 
$$(\theta,w)\cdot (\theta',w')=\big(\theta+\theta',w.(\theta( w')\big )\, ,$$
where $w.(\theta( w'))$ denotes the Heisenberg product of $w$ by $\theta( w')$. As a consequence, the center of $G$ is of dimension $1$ since it consists of the points of the form $(0,0,s)$ for $s\in\R$. 
Let us choose for simplicity $k=d=1$, the algebra of left-invariant vector fields is generated by  the vector fields $\partial_\theta$, $\partial_s$, $\Gamma_{\theta,x}$ and $\Gamma_{\theta,y}$
where
\begin{eqnarray*}
\Gamma_{\theta,x} &= &{\rm cos}\, \theta \partial_x +{\rm sin}\, \theta \partial_y +2(y{\rm cos}\,\theta-x{\rm sin} \theta)\partial_s,\\
\Gamma_{\theta,y} &= &-{\rm sin}\, \theta \partial_x +{\rm cos}\, \theta \partial_y -2(y{\rm sin}\,\theta +x{\rm cos }\theta)\partial_s.
\end{eqnarray*}
%
%
%
It is not difficult to check that the the radical of $B(\lambda)$
 is  of dimension~$1$. In the general case,  where $k\leq d$, the algebra of left-invariant vector fields is generated by  the vector fields $\partial_s$, the $2 (d-k)$ vectors
 $$X_{\ell}=\partial_{x_\ell}+2y_{\ell} \partial_s,\;\;Y_{\ell}=\partial_{y_\ell}-2x_\ell\partial_s,$$
 and  the $3k$ vectors defined for $1\leq j\leq k$ by : $\partial_{\theta_j}$,  $\Gamma_{\theta_j,x_j}$ and $\Gamma_{\theta_j,y_j}$
where
\begin{eqnarray*}
\Gamma_{\theta_j,x_j} &= &{\rm cos}\, \theta_j \partial_{x_j} +{\rm sin}\, \theta_j \partial_{y_j} +2(y_j{\rm cos}\,\theta_j-x_j{\rm sin} \theta_j)\partial_s,\\
\Gamma_{\theta_j,y_j} &= &-{\rm sin}\, \theta_j \partial_{x_j} +{\rm cos}\, \theta_j \partial_{y_j} -2(y_j{\rm sin}\,\theta_j +x_j{\rm cos }\theta_j)\partial_s.
\end{eqnarray*}
and this provides an example with a radical of dimension~$k$.
 
 \subsubsection{The tensor product of Heisenberg groups} Consider  $\H^{d_1} \otimes \H^{d_2}$  the set of elements~$(w_1,w_2) $ in~$\H^{d_1} \otimes \H^{d_2}$, which can be written~$(w_1,w_2)= (x_1,y_1,s_1,x_2,y_2,s_2)$ in $\R^{2d_1+1} \times \R^{2d_2+1}$, equipped with the law of product: 
   \[
(w_1,w_2)\cdot (w_1',w_2') = (w_1\cdot w_1',w_2 \cdot w_2'),
\] where $w_1\cdot w_1'$ and $w_2 \cdot w_2'$ denote respectively the product in $\H^{d_1}$ and $ \H^{d_2}$.  Clearly  $\H^{d_1} \otimes \H^{d_2}$ is a step~2 stratified  Lie group with center  of dimension $2$  and   radical index null. Moreover, for $\lambda=(\lambda_1,\lambda_2)$ in the dual of the center, the canonical skew bilinear form $B(\lambda)$ has radical $\{0\}$ with $\Lambda=\R^*\times\R^*$,  and one has~$\eta_1(\lambda)= 4 \, |\lambda_1|$ and $\eta_2(\lambda)= 4 \, |\lambda_2|$. In that case, strict spectral localization is a more restrictive condition than spectral localization. Indeed, if $f$ is spectrally localized, one has $\lambda_1\not=0$ {\bf or} $\lambda_2 \neq 0$ on the support of ${\mathcal F}(f)(\lambda)$, while one has  $\lambda_1\not=0$ {\bf and} $\lambda_2 \neq 0$ on the support of ${\mathcal F}(f)(\lambda)$ if $f$ is strictly spectrally localized.

\subsubsection{ Tensor product of  H-type groups} The group  $\mathbb R^{m_1+p_1} \otimes \mathbb R^{m_2+p_2}$ is easily verified to be a step~2 stratified  Lie group with center  of dimension $p_1+p_2$,    a radical index null and a skew bilinear form~$B(\lambda)$ defined on $\R^{m_1+m_2}$ with $m_1=2\ell_1$ and $m_2=2\ell_2$. The Zariski open set associated with $B$ is  given by~$\Lambda=(\R^{p_1}\setminus\{0\})\times (\R^{p_2}\setminus\{0\})$ and  for $\lambda=(\lambda_1,\cdots,\lambda_{p_1+p_2} )$, we have
\begin{equation} \label{H1}
\begin{aligned}
\eta_j(\lambda)  &= \sqrt{\lambda^2_1+\dots+\lambda^2_{p_1} },  \quad \mbox{for all}   \quad j \in \{1,\dots,\ell_1\} \quad \mbox{and} \\
\eta_j (\lambda) &= \sqrt{\lambda^2_{p_1+1}+\dots+\lambda^2_{p_1+p_2}}  \quad \mbox{for all}  \quad j \in \{\ell_1+1,\dots,\ell_1+\ell_2\}. 
\end{aligned}
\end{equation}

 \subsection{Main results}

The purpose of this paper is to establish optimal  dispersive inequalities for the linear Schr\"odinger  equation  on {step 2 stratified}  Lie groups  associated  with the sublaplacian. In view   of~(\ref{formulafourierdelta}) and the fact that the ``frequencies"~$ \zeta (\alpha,\lambda)$ associated with~$H(\lambda)$ given by 
\eqref{eq:freq} are homogeneous   of degree one in~$\lambda$, the Schr\"odinger  operator on~$G$ behaves 
 like a wave operator   on a space of the same dimension~$p$ as the center of  $G$, and like a Schr\"odinger operator on a space of the same dimension~$k$ as the radical of the canonical skew-symmetric form. By comparison with the classical dispersive estimates, the expected result would be a dispersion phenomenon with an optimal rate of decay of order $ | t   |^{- \frac{k+p-1} 2}$. However, as   will be seen through   various examples, this  anticipated  rate is not always achieved. To reach this maximum rate of dispersion, we   require a condition on~$ \zeta (\alpha,\lambda)$. 
  \medskip

\begin{assumption}\label{keyp} 
For each multi-index~$\alpha $ in~$ {\mathbb N}^d$, the Hessian matrix of the map~$\lambda \mapsto \zeta (\alpha,\lambda)$ satisfies
$$
{\rm rank } \,D^2_\lambda \zeta (\alpha,\lambda) = p-1
$$
where~$p$ is the dimension of the center of~$G$.
\end{assumption}

\begin{remark} {\rm  As was observed  in Paragraph {\rm\ref{freq}},   $\zeta (\alpha,\lambda)$ is a smooth function, homogeneous   of degree one on  $\Lambda$. By homogeneity arguments, one therefore has~$D_\lambda^2 \zeta (\alpha,\lambda) \lambda = 0$. It follows that there always holds
$$
{\rm rank } \,D^2_\lambda \zeta (\alpha,\lambda) \leq p-1 \, , 
$$
hence Assumption~\ref{keyp} may be understood as a maximal rank property.
}
\end{remark}

 \noindent Let us now present the dispersive inequality for the  Schr\"odinger equation. Recall that the  linear Schr\"odinger  equation  writes as follows on $G$:
\begin{equation}\label{eq:sh}
\left\{\begin{array}{l}
\left(i\partial_t -  \Delta_{G}\right) f=0\\
f_{|t=0}=f_0\, ,
\end{array}\right.
\end{equation}
where the function $f$ with complex values depends on $(t,x) \in \R \times G$.

\begin{theorem}\label{dispgrad}  Let $G$ be a {step {\rm2} stratified}  Lie group    with center of dimension $p$ with $1\leq p < n$ and   radical index $k$. Assume that  Assumption~{\rm\ref{keyp}} holds.  A  constant $C$ exists such that if   $f_0$ belongs to~$L^1(G)$ and is strictly spectrally localized in a ring of~$\R$ in the sense of Definition~{\rm\ref{def:strispecloc}}, then   the associate solution~$f$ to the Schr\"odinger  equation \refeq{eq:sh} satisfies
\begin{equation}\label{eq:gradeddispS}
\|f(t, \cdot )\|_{ L^\infty(G)}  \leq \frac {C} {| t   |^{ \frac{k}{2}}(1+  | t   |^{\frac {p- 1} {2 }} )} \|f_0\|_{ L^1(G)} \, , 
\end{equation}
for all $t\neq 0$ and the result is sharp in time.
\end{theorem}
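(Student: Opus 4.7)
The plan is to reduce the estimate to an $L^\infty$ bound on a convolution kernel, then exploit the inversion formula~\eqref{inversionformula} to represent the kernel as an explicit oscillatory integral. Writing $f(t,\cdot)=f_0*k_t$ where $k_t$ is the kernel whose Fourier transform equals $\mathrm{e}^{-it(H(\lambda)+|\nu|^2)}$ (truncated by the strict-spectral-localization cutoff~$\theta$), Young's inequality gives
$$
\|f(t,\cdot)\|_{L^\infty(G)}\leq \|k_t\|_{L^\infty(G)}\|f_0\|_{L^1(G)},
$$
so it suffices to show $\|k_t\|_{L^\infty(G)}\leq C|t|^{-k/2}(1+|t|^{-(p-1)/2})$. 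Using the inversion formula, evaluating the trace on the Hermite basis $(h_{\alpha,\eta(\lambda)})$, and computing the matrix coefficients of $u^{\lambda,\nu}_{X(\lambda,x)}$ via~\eqref{defpilambda}, one obtains, for $x=\exp(P+Q+R+Z)$, a representation of the form
$$
k_t(x)=\kappa\sum_{\alpha\in\mathbb N^d}\int_{\Lambda}\int_{\mathfrak r^*_\lambda}\mathrm{e}^{-it(\zeta(\alpha,\lambda)+|\nu|^2)}\,\mathrm{e}^{-i\nu(R)-i\lambda(Z)}\,\Phi_\alpha(\lambda,P,Q)\,\theta_\alpha(\lambda)\,|\mathrm{Pf}(\lambda)|\,d\nu\,d\lambda,
$$
where $\Phi_\alpha$ collects the Hermite matrix coefficients and $\theta_\alpha$ is the truncation. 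The strict spectral-localization hypothesis forces each factor $(2\alpha_j+1)\eta_j(\lambda)$ to sit in a bounded interval bounded away from zero, which (i) restricts the sum over $\alpha$ to a finite set once the angular part of $\lambda$ is fixed, (ii) confines the $\lambda$ integration to a ring in $\mathfrak z^*$, and (iii) yields uniform control on all symbol-type factors.

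Next I would perform the $\nu$-integration. Since $\nu\in\R^k$ enters only through the Gaussian oscillator $\mathrm{e}^{-it|\nu|^2-i\nu(R)}$ (the cutoff is independent of $\nu$), an exact Fresnel integration yields a contribution of size $|t|^{-k/2}\mathrm{e}^{i|R|^2/(4t)}$, with no dependence on the other variables beyond a harmless exponential. This is exactly the Schr\"odinger-type decay coming from the radical direction of $B(\lambda)$, and it alone already proves the bound for $|t|\leq 1$, since the remaining $\lambda$-integral is over a fixed compact set and a finite sum in $\alpha$ is bounded uniformly in $x$.

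The heart of the argument is the remaining $\lambda$-integral for $|t|\geq 1$. After the $\nu$-integration, one is reduced to bounding
$$
I_\alpha(t,x)=\int_{\Lambda}\mathrm{e}^{-it\Psi(\lambda)}\,a_\alpha(\lambda,x,t)\,d\lambda,\qquad \Psi(\lambda):=\zeta(\alpha,\lambda)+\lambda(Z)/t,
$$
where $a_\alpha$ is a smooth symbol with $\lambda$-support in a fixed ring. Because $\zeta(\alpha,\cdot)$ is homogeneous of degree one on $\Lambda$, the direction $\lambda$ always lies in the kernel of $D^2_\lambda\zeta(\alpha,\lambda)$, so its rank cannot exceed $p-1$. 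Assumption~\ref{keyp} postulates maximal rank $p-1$, which means the Hessian is non-degenerate on the $(p-1)$-dimensional sphere $\{|\lambda|=1\}$. I would therefore decompose the $\lambda$-integration into polar coordinates $\lambda=r\omega$ with $r>0$ and $\omega$ on the unit sphere of $\mathfrak z^*$, reduce the phase to $r(\zeta(\alpha,\omega)+\omega(Z)/t)$, and apply the stationary phase lemma on the sphere using the maximal-rank assumption on the angular Hessian of $\zeta$. This yields the extra $|t|^{-(p-1)/2}$, uniformly in $x$ and in $\alpha$ (the sum over $\alpha$ being finite thanks to strict spectral localization). The main obstacle, and the reason for Assumption~\ref{keyp}, is precisely handling the built-in degeneracy of $D^2_\lambda\zeta$: without a maximal-rank hypothesis on the angular Hessian one cannot extract the full wave-type decay, and indeed the examples in the paper show this factor can be absent.

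Finally, to prove sharpness in time I would test the estimate on a Schwartz datum~$f_0$ whose Fourier transform is a rank-one projector against a fixed Hermite function $h_{\alpha_0,\eta(\lambda)}$ times a smooth cutoff supported in a ring of~$\Lambda$ and concentrated in~$\nu$. Evaluating $f(t,0)$ with such a datum the $\nu$-integral yields exactly $|t|^{-k/2}$ and, for~$|t|\geq 1$, the $\lambda$-integral produces $|t|^{-(p-1)/2}$ because the stationary point of the phase $\zeta(\alpha_0,\lambda)$ on the sphere is non-degenerate (by Assumption~\ref{keyp}). Matching upper and lower bounds establishes optimality.
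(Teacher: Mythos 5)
Your overall architecture coincides with the paper's: reduction to an $L^\infty$ bound on a convolution kernel via Young's inequality and the inversion formula, expansion on the Hermite basis, exact Fresnel integration in $\nu$ producing the $|t|^{-k/2}$ factor, a stationary phase argument in $\lambda$ whose nondegeneracy transverse to the radial direction is supplied by Assumption~\ref{keyp}, and a rank-one datum for sharpness. Your polar-coordinate stationary phase is a mild variant of the paper's change of variables $\mathcal H$ (both exploit that the kernel of $D^2_\lambda\zeta$ is the radial line, along which the phase is constant at its critical points), and your optimality argument is essentially the paper's Proposition~4.1.

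However, there is a genuine gap in your treatment of the sum over $\alpha$. Strict spectral localization does \emph{not} restrict the sum to a finite set, nor does it confine the $\lambda$-integration to a fixed ring: the constraint $(2\alpha_j+1)\eta_j(\lambda)\in\mathcal C$ forces $\eta_j(\lambda)\sim (2\alpha_j+1)^{-1}$, so the $\lambda$-support of the $\alpha$-th term shrinks towards the origin like $1/|\alpha|$ and \emph{every} $m=|\alpha|$ contributes (for H-type groups, for instance, all $\alpha$ with entries comparable to $m$ contribute on the shell $|\lambda|\sim 1/m$). Consequently the series over $\alpha$ is infinite, and the whole difficulty is to make the stationary phase estimate quantitative in $\alpha$: after the rescaling $\gamma=m\lambda$ the support becomes a fixed ring, the Jacobian together with the homogeneity of the Pfaffian produces a factor $m^{-p-d}$, each integration by parts against the phase costs a factor $m$ (since $\zeta(\alpha,\cdot)$ and its derivatives are of size $m$ on the ring, as are the derivatives of the rescaled Hermite coefficients $g_{\alpha_j}$), the stationary phase itself yields $C|t|^{-(p-1)/2}m^{p-1}$, and convergence finally rests on $\sum_m m^{d-1}\,m^{-p-d}\,m^{p-1}=\sum_m m^{-2}<\infty$. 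None of this bookkeeping appears in your proposal, and even your ``easy'' bound for $|t|\leq 1$ already requires summing this series (it converges only thanks to the $m^{-p-d}$ weight and $p\geq 1$), so the claim that it is ``bounded uniformly in $x$'' because the $\alpha$-sum is finite does not stand. A secondary, fixable imprecision: on the sphere the Hessian of the full phase is $P\,D^2\zeta\,P-G(\omega_0)P$ rather than the angular Hessian of $\zeta$ alone, so nondegeneracy follows from Assumption~\ref{keyp} only at critical points where the phase vanishes; the other critical points of the restricted phase must be disposed of by non-stationarity in the radial variable (the paper avoids this by splitting the kernel according to the size of $\nabla_\gamma\Phi_\alpha$ and treating the non-stationary region separately).
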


\noindent The fact that a spectral localization is required in order to obtain the dispersive estimates is not surprising. Indeed, recall that in the $\R^d$ case for instance, the dispersive estimate for the Schr\"odinger equation  derives immediately (without any spectral localization assumption) from the fact that the solution $u(t)$ to the free Schr\"odinger equation  on $\R^d$ with Cauchy data $u_0$ writes  for~$t \neq 0$
$$ u(t, \cdot ) = u_0 *\frac 1 {(-2i \pi t)^{\frac {d}2} } {\rm e}^{-i \frac {|\cdot|^2}{4 t}} \,,$$ 
where $*$ denotes the convolution product in $\R^d$ (for a detailed proof of this fact, see for instance  Proposition 8.3  in \cite{bcd}). However proving   dispersive estimates for the wave equation in $\R^d$ requires more elaborate techniques (including oscillating  integrals) which involve an assumption of spectral localization  in a ring. 
In the case of a {step 2 stratified}  Lie group $G$, the main difficulty arises from the complexity of the expression of Schr\"odinger  propagator that mixes  a wave  operator  behavior with that of a Schr\"odinger operator. This explains on the one hand the decay rate in Estimate \eqref{eq:gradeddispS} and on the other hand the hypothesis of strict  spectral localization.

   \medskip

\noindent Let us now discuss  Assumption~\ref{keyp}. 
As mentioned above, there is  no dispersion   phenomenon for the Schr\"odinger  equation on the Heisenberg group $\H^d$ (see~\cite{bgx}).  Actually the same holds  for the tensor  product of Heisenberg  groups $\H^{d_1} \otimes \H^{d_2}$ whose center is of dimension $p=2$ and   radical index   null, and more generally to the case   of  2 step  stratified Lie groups,  decomposable on non trivial 2 step  stratified Lie groups :  we derive indeed from Theorem~\ref{dispgrad} the following corollary. 
\begin{corollary}\label{cor}
Let $G=\otimes_{1\leq m\leq r} G_m$ be a decomposable, $2$ step stratified Lie group 
where the groups~$G_m$ are non trivial $2$-step  stratified  Lie groups satisfying Assumption~{\rm\ref{keyp}}, of radical index~$k_m$ and with centers of dimension $p_m$. Then the dispersive estimates holds with rate~$|t|^{-q}$,
$$q:= {1\over 2}\sum_{1\leq m\leq r} \left(k_m+p_m-1\right)={1\over 2} (k+p-r) \, ,$$
where $p$ is the dimension of the center of $G$ and $k$ its radical index. Besides, 
this rate is optimal.
\end{corollary}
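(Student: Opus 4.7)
The plan is to exploit the tensor-product structure to reduce the estimate on $G$ to estimates on each factor $G_m$, to which Theorem~\ref{dispgrad} applies. The starting point is the observation that on a decomposable group $G = \otimes_{1 \le m \le r} G_m$, the sublaplacian splits as $\Delta_G = \sum_{m=1}^r \Delta_{G_m}$, where each $\Delta_{G_m}$ acts only on the variables of $G_m$ and hence the operators pairwise commute. Consequently, the Schr\"odinger semigroup factorizes: $e^{it\Delta_G} = \prod_{m=1}^r e^{it\Delta_{G_m}}$. Moreover, the dual of the center of $G$ splits as $\mathfrak{z}^\star = \mathfrak{z}_1^\star \times \cdots \times \mathfrak{z}_r^\star$, and the skew-symmetric form $B(\lambda)$ is block-diagonal with block $B_m(\lambda_m)$ on $G_m$; in particular each frequency $\eta_j(\lambda)$ depends only on the $\lambda_m$ of the block containing index $j$. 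This means that strict spectral localization on $G$ in the sense of Definition~\ref{def:strispecloc} is inherited, block by block, as strict spectral localization on each $G_m$.

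Next, I would formulate the dispersive estimate as a uniform bound on a convolution kernel. Given the spectral cutoff function $\theta$, let $K(t,\cdot) \in L^\infty(G)$ be the (distributional) convolution kernel such that $f(t,\cdot) = f_0 * K(t,\cdot)$ for every $f_0 \in L^1(G)$ strictly spectrally localized via $\theta$; the existence and $L^\infty$-bound of $K(t,\cdot)$ are equivalent to the dispersive inequality by Schur's test. Because the group law on $G$ acts coordinate-wise on the factors, $(xy^{-1})_m = x_m y_m^{-1}$, and because of the factorization of $e^{it\Delta_G}$ and of the spectral projector, the kernel itself factorizes:
\begin{equation*}
K(t, (x_1, \dots, x_r)) = \prod_{m=1}^r K_m(t, x_m),
\end{equation*}
where $K_m(t,\cdot)$ is the kernel associated with $e^{it\Delta_{G_m}}$ acting on functions on $G_m$ strictly spectrally localized via $\theta$.

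Now I would apply Theorem~\ref{dispgrad} on each factor $G_m$, which by hypothesis satisfies Assumption~\ref{keyp}, to obtain $\|K_m(t,\cdot)\|_{L^\infty(G_m)} \le C_m |t|^{-(k_m + p_m - 1)/2}$ for $|t| \ge 1$. Multiplying yields $\|K(t,\cdot)\|_{L^\infty(G)} \le C |t|^{-q}$ with $q = \tfrac12\sum_m(k_m + p_m - 1) = \tfrac12(k+p-r)$, and by Young's inequality $\|f(t,\cdot)\|_{L^\infty(G)} = \|f_0 * K(t,\cdot)\|_{L^\infty(G)} \le C|t|^{-q} \|f_0\|_{L^1(G)}$. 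For the optimality, the sharpness part of Theorem~\ref{dispgrad} on each $G_m$ furnishes initial data $f_{0,m} \in L^1(G_m)$ for which the bound is saturated along a sequence of times; taking $f_0 = \bigotimes_{m=1}^r f_{0,m}$ gives a solution that factorizes as $f(t,x) = \prod_m f_m(t,x_m)$ and therefore saturates the product bound, proving that the rate $|t|^{-q}$ cannot be improved.

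The main obstacle is purely bookkeeping: verifying that the definitions of strict spectral localization, of the Pfaffian, and of the reduced irreducible representations all behave compatibly with the tensor-product decomposition, so that the factorization of $K(t,\cdot)$ is legitimate rather than merely plausible. Once this is in place, the estimate and its sharpness follow mechanically from Theorem~\ref{dispgrad}.
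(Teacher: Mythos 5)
Your proposal is correct and follows essentially the same route as the paper, which disposes of the corollary in one line by invoking Theorem~\ref{dispgrad} on each factor together with the observation that $\Delta_G=\otimes_{1\leq m\leq r}\Delta_{G_m}$; your write-up simply makes explicit the factorization of the propagator, of the convolution kernel, and of the optimal data that the paper leaves implicit.
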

 \noindent Corollary~\ref{cor} is a direct consequence of Theorem~\ref{dispgrad} and  the simple observation that $\Delta_G= \otimes_{1\leq m\leq r}  \Delta_{G_m}.$
This result  applies for example  to the 
tensor product of Heisenberg groups, for which there is no dispersion,  and  to the tensor product  of   H-type  groups $\mathbb R^{m_1+p_1} \otimes \mathbb R^{m_2+p_2}$ for which the dispersion rate is $ t^{- (p_1+p_2-2)/2}$ (see~\cite{hiero}). 
Corollary~\ref{cor} therefore shows that  it can happen that the ``best'' rate of decay~$ | t   |^{-(k+p-1)/2}$ is not reached, in particular for  decomposable Lie groups. This suggests that  Assumption  \ref{keyp} could be related with decomposability.

 \medskip 
 \noindent
  More generally, a large class of groups  which do not satisfy the Assumption~\ref{keyp} is given by  step~2 stratified  Lie group~$G$  for which~$\zeta(0, \lambda)$ is a  linear form  on    each connected component of the Zariski-open subset $\Lambda$.  Of course, the Heisenberg group and any tensor product of Heisenberg group is of that type.  We then have the following result which illustrates that there exists examples of groups without any dispersion and which do not satisfy Assumption~\ref{keyp}.

\begin{proposition}\label{remnodisp}
Consider a  step~$2$ stratified  Lie group~$G$ whose radical  index is null and for which~$\zeta(0, \lambda)$ is a  linear form  on    each connected component of the Zariski-open subset $\Lambda$. Then, there exists $f_0\in{\mathcal S}(G)$, $x\in G$ and $c_0>0$ such that 
$$\forall t\in\R^+,\;\;|{\rm e}^{-it\Delta_G}f_0(x)|\geq c_0.$$
\end{proposition}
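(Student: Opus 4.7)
The strategy is to exhibit an initial datum whose Schr\"odinger evolution reduces to a pure right translation in the direction of the center of $G$, so that no decay of the $L^\infty$-norm can occur. Concretely, I would pick a connected component $\Lambda_0$ of $\Lambda$ and use the hypothesis to write $\zeta(0,\lambda)=\lambda(\ell)$ on $\Lambda_0$ for some nonzero $\ell\in{\mathfrak z}$ (nonzero because $\zeta(0,\lambda)>0$ on $\Lambda$), then prescribe the group Fourier transform of $f_0$ to be concentrated on the Hermite ground state:
$$
{\mathcal F}(f_0)(\lambda):=\phi(\lambda)\,\Pi_0(\lambda),
$$
where $\phi\in C_c^\infty(\Lambda_0)$ is a nonnegative bump, not identically zero, and $\Pi_0(\lambda)$ is the rank-one orthogonal projector onto $h_{0,\eta(\lambda)}$.

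The heart of the argument is a short Fourier computation. The functional calculus~\eqref{def} and the linearity of $\zeta(0,\cdot)$ on $\Lambda_0$ give
$$
{\mathcal F}\bigl(e^{-it\Delta_G}f_0\bigr)(\lambda)=e^{-it\zeta(0,\lambda)}\,\phi(\lambda)\,\Pi_0(\lambda)=e^{-it\lambda(\ell)}\,{\mathcal F}(f_0)(\lambda);
$$
since $\ell$ is central, the identity $u^{\lambda}_{\exp(-t\ell)}=e^{it\lambda(\ell)}\,\Id$ combined with the standard covariance of ${\mathcal F}$ under right translation rewrites the right-hand side as ${\mathcal F}\bigl(y\mapsto f_0(y\cdot\exp(-t\ell))\bigr)(\lambda)$. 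Injectivity of ${\mathcal F}$ then yields the key identity
$$
e^{-it\Delta_G}f_0(x)=f_0\bigl(x\cdot\exp(-t\ell)\bigr)\qquad\forall\,(t,x)\in\R\times G.
$$

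To conclude, pick $x_*\in G$ realizing the positive maximum $c_0:=\|f_0\|_{L^\infty(G)}=|f_0(x_*)|$; then for every $t\in\R^+$ the point $x:=x_*\cdot\exp(t\ell)$ satisfies $|e^{-it\Delta_G}f_0(x)|=c_0$, which yields the claim. Equivalently, $\|e^{-it\Delta_G}f_0\|_{L^\infty(G)}=c_0$ for every $t\ge 0$, ruling out any dispersive decay. The only real obstacle is verifying that the $f_0$ so prescribed genuinely belongs to ${\mathcal S}(G)$: this reduces to checking smoothness of $\Pi_0(\lambda)$ in $\lambda\in\Lambda_0$ (a consequence of the smoothness and positivity of each $\eta_j$ on $\mathrm{supp}\,\phi$), followed by repeated integration by parts in $\lambda$ inside the inversion formula~\eqref{inversionformula} to produce arbitrary polynomial decay of $f_0$ in the homogeneous norm~$\rho$.
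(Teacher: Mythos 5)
Your proposal is correct and is essentially the paper's own argument: the same initial datum (a bump function times the projector onto the Hermite ground state, supported in one connected component of $\Lambda$), with the linearity of $\zeta(0,\cdot)$ used to cancel the phase — the paper does this by evaluating at the moving point $X=(0,0,tZ_0)$ with $\lambda(Z_0)=\zeta(0,\lambda)$, which is exactly your translation identity $e^{-it\Delta_G}f_0=f_0(\cdot\,\exp(t\ell))$ in disguise (up to an immaterial sign: the correct phase is $e^{+it\zeta(0,\lambda)}$, so $\ell$ should be replaced by $-\ell$ in your formulas). Your explicit verification that $f_0\in\mathcal{S}(G)$ is a welcome addition that the paper leaves implicit.
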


\medskip 
\noindent Finally we point out that the dispersive estimate given in Theorem~\ref{dispgrad} can be regarded as a first step towards space-time estimates of the Strichartz type. However due to the strict spectral localization assumption, the   Besov spaces which should appear in the study (after summation over frequency bands) are naturally anisotropic;   thus proving such estimates is likely to be very technical, and is postponed to future works.

\subsection{Strategy of the proof of Theorem~\ref{dispgrad}}
 In  the  statement of  Theorem~\ref{dispgrad}, there are two different results: the dispersive estimate in itself on the one hand, and its optimality on the other. 
 Our strategy or proof is closely related to the method developed in~\cite{bgx} and~\cite{hiero} with additional non negligible technicalities.
 
 \medskip
  \noindent   
In the situation of~\cite{bgx}  where  the  Heisenberg group $\H^d $ is considered, the authors  prove that there is no dispersion by
exhibiting explicitly a Cauchy data~$f_0$ for which the solution~$f(t,\cdot)$ to the Schr\"odinger equation~(\ref{eq:sh}) satisfies
\begin{equation}\label{eq:nd} \forall \,q \in [1,\infty] \, , \quad    \|f(t,\cdot)\|_{ L^q(\H^d)}  =  \|f_0\|_{ L^q(\H^d)} \, .
\end{equation}
More precisely,
they take advantage of the fact that the Laplacian-Kohn operator~$ \Delta_{ \H^d  }$ can be recast under the form
\begin{equation}
\label{eq:hk} \Delta_{ \H^d  }= 4\sum_{j=1}^{d}  (Z_j \overline Z_j +i\partial_s )\,,
\end{equation}
where $\bigl\{Z_1, \overline Z_1,  ..., \ Z_d, \overline Z_d, \partial_s \bigr\}$ is the canonical basis of Lie algebra of left invariant vector fields on~$\H^d$ (see~\cite{bfg} and the references therein for  more details). This implies that for a non zero function $f_0$ belonging to~$ \mbox  {Ker} \: \big(\sum_{j=1}^d Z_j \overline Z_j \big)$, the solution of the Schr\"odinger equation on the Heisenberg group~$f(t)={\rm e}^{-it\Delta_{\H^d}}f_0$ actually solves a transport equation:
$$f(z,s,t)= {\rm e}^{4d t\partial_s}f_0(z,s)=f_0 (z, s + 4dt) $$
and hence
satisfies  \refeq{eq:nd}.  
The arguments used in \cite{hiero} for general~H-type groups are similar to the ones developed in~\cite{bgx}: the dispersive estimate is obtained using an explicit formula for the solution, coming from Fourier analysis,  combined with a stationary phase theorem. The Cauchy data used to prove the optimality is again in the kernel of an adequate operator, by a decomposition similar to~(\ref{eq:hk}). 

\medskip

 \noindent   As in  \cite{bgx}   and \cite{hiero}, the first step of the proof of Theorem~\ref{dispgrad}  consists in writing an explicit formula for the solution of the equation by use of  the Fourier transform.   Let us point out that    in the setting of~\cite{bgx}   and~\cite{hiero},    irreducible representations are isotropic with respect to the  dual of the center of the group; this isotropy allows to  reduce to a one-dimensional framework and deduce the dispersive effect from a   careful use of a stationary phase argument of~\cite{stein3}. As we have already seen in Paragraph \ref{defirreducible}, the irreducible representations are no longer isotropic in the general case of stratified Lie groups, and thus we    adopt a more technical  approach making use of Schr\"odinger  representation and taking advantage of some properties of  Hermite functions appearing in the  explicit representation of the solutions derived by Fourier analysis (see Section~\ref{prooflemmas}).  The optimality of the inequality is   obtained  as in~\cite{bgx}   and~\cite{hiero}, by an adequate  choice of the initial data.

 \subsection{Organization of the paper}

 The article is organized as follows.    In Section~\ref{sec:explicit}, we write an explicit formulation of the solutions of the Schr\"odinger equation. Then, Section~\ref{sec:di}  is devoted to
 the proof of Theorem~\ref{dispgrad} and in
 Section~\ref{optimality},  we discuss the  optimality of the result and prove Proposition~\ref{remnodisp}.

 \medskip
 
 \noindent Finally, we mention that the letter~$C$ will be used to denote a universal constant
which may vary from line to line. We also use~$A\lesssim B$  to
denote an estimate of the form~$A\leq C B$   for some
constant~$C$.

 \medskip
 
\noindent {\bf Acknowledgements. } The authors wish to thank Corinne Blondel, Jean-Yves Charbonnel, Laurent Mazet, Fulvio  Ricci and Mich\`ele Vergne  for enlightening discussions. They also extend their gratitude to the anonymous referee for numerous remarks which improved the presentation of this paper, and for providing a   simpler and more conceptual proof of Lemma~3.6 than our original proof.

\section{Explicit representation of the solutions}\label{sec:explicit}

 \subsection{The convolution kernel}\label{stationaryphase*} 
Let~$f_0 $ belong to~$ {\mathcal S}(G)$ and let us consider~$f(t,\cdot )$ the solution to the free Schr\"odinger equation~(\ref{eq:sh}). In view of  \eqref{formulafourierdelta}, we have 
$$
{\mathcal F}(f(t,\cdot)) (\lambda,\nu) =  
{\mathcal F}(f_0) (\lambda,\nu)\,  {\rm e}^{it|\nu|^2+it H(\lambda)}\, ,
$$
which implies easily (arguing as in the Appendix) that $f(t,\cdot)$ belongs to $ {\mathcal S}(G)$. Assuming that $f_0$ is  strictly spectrally localized in the sense of Definition \ref{def:strispecloc},    there exists a smooth function $\theta$  compactly supported in a ring ${\mathcal C}$ of~$\R$ such that, defining
$$
\Theta (\lambda) :=  \prod_{j=1}^d  \theta \big ((P_j^2 + Q_j^2)(\lambda )    \big) \, , 
$$
then
$$
{\mathcal F}(f(t,\cdot)) (\lambda,\nu) = 
 {\mathcal F}(f_0) (\lambda,\nu) \, \Theta (\lambda) \,  {\rm e}^{it|\nu|^2+it H(\lambda)}\, .
$$ 
\noindent Therefore by  the inverse Fourier transform~(\ref{inversionformula}),
we deduce that the function~$f (t,\cdot)$   may be decomposed in the following way: 
\begin{equation}\label{formulaftx}
 f(t,x) = \kappa \, \int_{\lambda\in\Lambda}\int_{\nu\in\mathfrak r^*_\lambda}  {\rm{tr}} \, \Big((u^{\lambda,\nu}_{X(\lambda,x)})^\star \,  {\mathcal F}(f_0) (\lambda,\nu) \,  \Theta (\lambda) \,  {\rm e}^{it|\nu|^2+it H(\lambda)} \Big)   |{\mbox {Pf}} (\lambda) |  \,  d\nu\,d\lambda\, .
\end{equation}
We set for $ X\in\R^{n}$, 
\begin{equation}\label{defkt}
k_t(X) := \kappa \,  \int_{\lambda\in\Lambda}\int_{\nu\in\mathfrak r^*_\lambda} {\rm{tr}} \, \left(u^{\lambda,\nu}_X \,  \Theta (\lambda) \,  {\rm e}^{it|\nu|^2+it H(\lambda)} \right)  |{\mbox {Pf}} (\lambda) |  \,d\nu d\lambda \,.
\end{equation}
The function $k_t$ plays the role of a convolution kernel in the variables of the Lie algebra 
and we have the following result.

\begin{proposition}\label{firstreduction}
If the function~$k_t$ defined in~{\rm(\ref{defkt})} satisfies
\begin{equation}\label{Linftyboundk}
\forall t\in  \R \, , \quad \|  k_t \|_{L^\infty(\R^{n})} \leq \frac {C} {| t   |^{ \frac{k}{2}}(1+  | t   |^{\frac {p- 1} {2 }} )}\, \virgp
\end{equation}
then Theorem~{\rm\ref{dispgrad}} holds.
\end{proposition}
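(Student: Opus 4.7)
The plan is to identify the propagator $f_0\mapsto f(t,\cdot)={\rm e}^{-it\Delta_G}f_0$, restricted to strictly spectrally localized data, with a left group convolution against a kernel directly related to~$k_t$, and then to invoke Young's inequality on~$G$ to transfer the pointwise bound~\eqref{Linftyboundk} into the dispersive estimate~\eqref{eq:gradeddispS}.

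Working first with $f_0\in\mathcal S(G)$, I would introduce $\widetilde k_t(x):=k_t(x^{-1})$ and aim to show the convolution identity $f(t,\cdot)=f_0\star\widetilde k_t$. Since the map $x\mapsto u^{\lambda,\nu}_{X(\lambda,x)}$ defined in~\eqref{defpilambda} is a unitary group homomorphism, one has $u^{\lambda,\nu}_{X(\lambda,x^{-1})}=(u^{\lambda,\nu}_{X(\lambda,x)})^{\star}$, so that substituting $x^{-1}$ for $x$ in~\eqref{defkt} and comparing with the inversion formula~\eqref{inversionformula} applied to $\widetilde k_t$ reads off, at least formally, that $\mathcal F(\widetilde k_t)(\lambda,\nu)$ is the operator of multiplication by $\Theta(\lambda)\,{\rm e}^{it|\nu|^2+itH(\lambda)}$ on $L^2(\mathfrak p_\lambda)$. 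Combining this with the convolution rule~\eqref{fourconv} and the strict spectral localization of $f_0$ (which makes $\Theta(\lambda)$ act as the identity from the right on $\mathcal F(f_0)(\lambda,\nu)$), one obtains $\mathcal F(f_0\star\widetilde k_t)=\mathcal F(f(t,\cdot))$, and the desired identity then follows by injectivity of the Fourier transform.

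Granted the convolution identity, the rest is routine: by~\eqref{convolutiondef} and the translation invariance of the Haar measure, Young's inequality gives
$$
\|f_0\star\widetilde k_t\|_{L^\infty(G)}\leq\|f_0\|_{L^1(G)}\,\|\widetilde k_t\|_{L^\infty(G)}\,,
$$
and since in the exponential coordinates of Section~\ref{gradedliegroup} the Haar measure is Lebesgue on $\R^n$ and inversion on a $2$-step group reduces to $x\mapsto-x$, one has $\|\widetilde k_t\|_{L^\infty(G)}=\|k_t\|_{L^\infty(\R^n)}$. The assumed bound~\eqref{Linftyboundk} then yields~\eqref{eq:gradeddispS} for Schwartz data, and the extension to all strictly spectrally localized $f_0\in L^1(G)$ follows by the density of $\mathcal S(G)$ in $L^1(G)$, after approximating $f_0$ by Schwartz functions that remain strictly spectrally localized in the same ring (which can be arranged by convolving a generic approximating sequence with a fixed smoothing kernel adapted to $\Theta$). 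The main obstacle is really the rigorous justification of the Fourier identity for~$\widetilde k_t$: the multiplier $\Theta(\lambda)\,{\rm e}^{it|\nu|^2+itH(\lambda)}$ is only bounded, not Hilbert--Schmidt, so $\widetilde k_t$ is a priori a tempered object rather than an element of $L^1$ or~$L^2$; the cleanest workaround is to insert a Gaussian factor ${\rm e}^{-\varepsilon(|\nu|^2+H(\lambda))}$ in~\eqref{defkt} and in the Fourier representation of $f(t,\cdot)$, and pass to the limit $\varepsilon\to 0^+$ once the $\varepsilon$-truncated dispersive inequality has been established.
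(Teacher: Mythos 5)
Your endgame is the same as the paper's: once one knows that $e^{-it\Delta_G}f_0$ is the group convolution of $f_0$ with a kernel whose $L^\infty$ norm is controlled by $\|k_t\|_{L^\infty(\R^n)}$, Young's inequality $\|f_0\star \widetilde k_t\|_{L^\infty(G)}\leq \|f_0\|_{L^1(G)}\|\widetilde k_t\|_{L^\infty(G)}$ and the identity $\exp(X)^{-1}=\exp(-X)$ finish the argument, and the density remark for general strictly spectrally localized $L^1$ data is a reasonable (indeed slightly more careful than the paper's) way to close. The divergence is in how the convolution identity is obtained, and that is where your write-up stops short of a proof. You derive it from the convolution rule~(\ref{fourconv}) together with the claim that $\mathcal F(\widetilde k_t)(\lambda,\nu)$ equals the multiplier $\Theta(\lambda)\,e^{it|\nu|^2+itH(\lambda)}$, plus injectivity of the Fourier transform. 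But, as you yourself flag, this multiplier is only bounded (and $e^{it|\nu|^2}$ is not integrable in $\nu$), so $\widetilde k_t$ is not in $L^1$ or $L^2$, its Fourier transform is not defined by the integral formula of Section~\ref{Fourier}, and injectivity of $\mathcal F$ would have to be established on a class of tempered objects that the paper never sets up. That step is the heart of your proof and it is only asserted formally, with a regularization sketched as a ``workaround''. Note moreover that the regularization does not mesh cleanly with the statement to be proved: the hypothesis~(\ref{Linftyboundk}) is a bound on $k_t$, not on the regularized kernels $k_t^\varepsilon$, so passing to the limit in $f_0\star\widetilde k_t^{\,\varepsilon}$ still requires convergence of $k_t^\varepsilon$ to $k_t$ with enough uniformity — which essentially forces you back to a direct computation.

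The paper avoids all of this by never taking the Fourier transform of the kernel. It substitutes the defining integral $\mathcal F(f_0)(\lambda,\nu)=\int_G f_0(y)\,u^{\lambda,\nu}_{X(\lambda,y)}\,d\mu(y)$ into the inversion formula~(\ref{formulaftx}), uses the homomorphism property $(u^{\lambda,\nu}_{X(\lambda,x)})^\star u^{\lambda,\nu}_{X(\lambda,y)}=u^{\lambda,\nu}_{X(\lambda,x^{-1}\cdot y)}$ together with left invariance of the Haar measure, and then Fubini and a measure-preserving (orthonormal, $\lambda$-dependent) change of coordinates on the Lie algebra to arrive directly at
$$
f(t,x)=\int_{\R^n}k_t(P,Q,Z,R)\,f_0\bigl(x\cdot\exp(P,Q,Z,R)\bigr)\,dP\,dQ\,dZ\,dR\,,
$$
from which $|f(t,x)|\leq\|k_t\|_{L^\infty(\R^n)}\|f_0\|_{L^1(G)}$ is immediate. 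This is the same convolution structure you are aiming for, obtained without ever needing $\mathcal F(\widetilde k_t)$ to exist or the Fourier transform to be injective on distributions. I recommend replacing your convolution-theorem-plus-injectivity step by this direct substitution; as written, the central identity of your argument remains a formal claim.
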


\begin{proof}
We write, according to~(\ref{formulaftx}),
\begin{eqnarray*}
 f(t,x) &  =  & \kappa \, \int_{\lambda\in\Lambda}\int_{\nu\in\mathfrak r^*_\lambda} \int_{y\in G} {\rm{tr}} \, \Big((u^{\lambda,\nu}_{X(\lambda,x)} )^* u^{\lambda,\nu}_{X(\lambda,y)} \,  \Theta (\lambda) \,  {\rm e}^{it|\nu|^2+it H(\lambda)} \Big) f_0(y)  |{\mbox {Pf}} (\lambda) |  \,  d\nu\,d\lambda\,d\mu(y)\, \\
 & = & 
  \kappa \, \int_{\lambda\in\Lambda}\int_{\nu\in\mathfrak r^*_\lambda} \int_{y\in G} {\rm{tr}} \, \Big( u^{\lambda,\nu}_{X(\lambda,y)} \,  \Theta (\lambda) \,  {\rm e}^{it|\nu|^2+it H(\lambda)} \Big) f_0(x \cdot y)  |{\mbox {Pf}} (\lambda) |  \,  d\nu\,d\lambda\,d\mu(y)\, .
\end{eqnarray*}
Note that we have used the property that the map~$X \mapsto u^{\lambda,\nu}_X$ is a unitary representation, and the invariance of the Haar measure by translations. 

\smallskip \noindent 
Now we use the  exponential law $y\mapsto Y=(P(\lambda,y), Q(\lambda,y),Z, R(\lambda,y))$ and the fact that $d\mu(y)=dY$ the Lebesgue measure, then we perform a linear orthonormal change of  variables $$(P(\lambda,y),Q(\lambda,y),R(\lambda,y))\mapsto (\tilde P,\tilde Q,\tilde R)$$ so that $d\mu(y)=dY=d\tilde P\,d\tilde Q \,dZ\, d\tilde R$ and  we write
$$\displaylines{\qquad
 f(t,x) = \kappa \, \int_{\lambda\in\Lambda}\int_{\nu\in\mathfrak r^*_\lambda} \int _{(\tilde P,\tilde Q,Z,\tilde R)\in \R^{n}} {\rm{tr}} \, \Big(u^{\lambda,\nu}_{(\tilde P,\tilde Q,Z,\tilde R)} \,  \Theta (\lambda) \,  {\rm e}^{it|\nu|^2+it H(\lambda)} \Big) \hfill\cr\hfill
 \times f_0(x\cdot {\rm exp}(\tilde P,\tilde Q,Z,\tilde R))  |{\mbox {Pf}} (\lambda) |  \,  d\nu\,d\lambda\,d\tilde P\,d\tilde Q \,dZ\,d\tilde R\, .
\qquad \cr}
$$
Thanks to the Fubini Theorem and   Young inequalities, we can write (dropping the $\tilde\,$ on the variables),
\begin{eqnarray*}
|f(t,x)| & = & \left| \int _{( P, Q,Z, R)\in \R^{n}} k_t (P,Q,Z,R) f_0(x \cdot {\rm exp}( P ,Q,Z,R)) dP\,dQ\,dR\, dZ\right|\\
&\leq & \| k_t\| _{L^\infty(G)} \left| \int _{(P,Q,Z,R)\in\R^{n}} f_0(x \cdot {\rm exp}( P, Q,Z,R)) dP\,dQ\,dR\, dZ\right| \\
& \leq & \| k_t\| _{L^\infty(G)} \| f_0\|_{L^1(G)}.
\end{eqnarray*}
Proposition~\ref{firstreduction} is proved.
\end{proof}

 \noindent   In the next subsections, we make preliminary work by transforming the expression of $k_t$ and reducing the proof to   statements equivalent to~(\ref{Linftyboundk}).

\subsection{Transformation of $k_t$: expression in terms of Hermite functions}\label{transformation}
Decomposing the operator~$H(\lambda)$ in the basis of Hermite functions, and recalling notation~(\ref{eq:freqxy}) replaces~(\ref{defkt}) with 
$$
 k_t(X)=\kappa  \sum_{\alpha \in {\mathbb N}^d} \int_{\Lambda}\int_{\nu\in\mathfrak r^*_\lambda} 
e^{it|\nu|^2+ it \zeta( \alpha,  \lambda )}  \prod_{j=1}^d  \theta  \big(\zeta_j ( \alpha,  \lambda )   \big)   \big( u^{\lambda,\nu}_{X} h_{\alpha,\eta(\lambda)} |  h_{\alpha,\eta(\lambda)} \big)   |{\mbox {Pf}} (\lambda) |  \,  d\nu\, d\lambda \, ,   \;\;X\in\R^{n} \, .
$$
Using the explicit form of $u^{\lambda,\nu}_{X}$ recalled in~(\ref{defpilambda}) we find the following result.

\begin{lemma}\label{lem22}
 There is a constant $\wt \kappa$ and a smooth function $F$ such that with the above notation, we have for $t \neq 0$ 
$$
k_t(P,Q,tZ, R)= \frac{\wt \kappa \, {\rm e}^{-i \frac {|R|^2}{ 4 t}}\,}{t^{\frac k 2} }  \, \sum_{\alpha \in {\mathbb N}^d} 
\int_{\Lambda}  {\rm e}^{it\Phi_\alpha( Z ,  \lambda )}  G_\alpha \big (P,Q,\eta(\lambda)\big)  \,  |{\rm {Pf}} (\lambda) | \, F(\lambda)\, d\lambda  \, ,
$$
where the phase $\Phi_\alpha$ is given by 
$$
\Phi_\alpha( Z ,  \lambda ):=  \zeta (\alpha,\lambda) -\lambda(Z) \, ,
$$
with Notation~{\rm(\ref{eq:freq})}, 
and the function $G_\alpha$ is given by the following formula, for all~$(P,Q,\eta)  \in \R^{3d}$: 
\begin{equation}\label{defG}
 G_\alpha(P,Q,\eta ):=  \prod_{j=1}^d    \theta  \big( (2\alpha_j+1) \eta_j \big)  \, g_{\alpha_j}\Big(\sqrt {\eta_j  } \, P_j,\sqrt {\eta_j} \, Q_j \Big)
\end{equation}
while for each~$(\xi_1,\xi_2,n)$ in~$\R^2 \times {\mathbb N}$,  using Notation~{\rm(\ref{defhn})},
\begin{equation}\label{defgxi1xi2}
g_{n} (\xi_1,\xi_2):=e^{-i\frac{\xi_1 \xi_2 }2} \int_{\R} e^{-i \xi_2 \xi} h_{n}( \xi_1 +\xi) h_{n}(\xi) \, d\xi \, . 
\end{equation}
\end{lemma}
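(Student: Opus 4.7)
The starting point is the expression for $k_t$ obtained just before the statement, namely
\[
k_t(X)=\kappa \sum_{\alpha \in {\mathbb N}^d} \int_{\Lambda}\int_{\nu\in\mathfrak r^*_\lambda}
e^{it|\nu|^2+ it \zeta( \alpha,  \lambda )}  \prod_{j=1}^d  \theta  \big(\zeta_j ( \alpha,  \lambda )   \big)   \big( u^{\lambda,\nu}_{X} h_{\alpha,\eta(\lambda)} \,\big|\,  h_{\alpha,\eta(\lambda)} \big)   |{\rm Pf} (\lambda) |  \,  d\nu\, d\lambda,
\]
and the plan is to (i) compute the matrix element explicitly in the chosen symplectic basis of $\mathfrak v$, (ii) perform the Gaussian integral in $\nu$, and (iii) rescale $Z$.

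For step (i), I plug formula~\eqref{defpilambda} into the inner product. Since $h_{\alpha,\eta(\lambda)}$ is real-valued, writing $\xi=\sum_j \xi_j P_j(\lambda)\in\mathfrak p_\lambda$ and using that in the chosen basis $\lambda([P_i(\lambda),Q_j(\lambda)])=\eta_j(\lambda)\delta_{ij}$ (while the brackets with $R_\ell$'s vanish), the matrix element factorizes as
\[
\big( u^{\lambda,\nu}_{X} h_{\alpha,\eta(\lambda)} \,\big|\,  h_{\alpha,\eta(\lambda)} \big)
= e^{-i\nu(R)}\,e^{-i\lambda(Z)}\prod_{j=1}^d e^{-i\eta_j(\lambda)P_jQ_j/2}\!\int_{\R} e^{-i\eta_j(\lambda)Q_j\xi_j} h_{\alpha_j,\eta_j(\lambda)}(P_j+\xi_j)h_{\alpha_j,\eta_j(\lambda)}(\xi_j)\,d\xi_j.
\]
The substitution $\xi_j = \eta_j(\lambda)^{-1/2}\tau_j$ combined with the rescaling identity $h_{n,\beta}(\xi)=\beta^{1/4}h_n(\beta^{1/2}\xi)$ from~\eqref{defhn} eliminates every $\eta_j(\lambda)^{1/4}$ coming from the two Hermite factors against the $\eta_j(\lambda)^{-1/2}$ of the Jacobian, leaving exactly $g_{\alpha_j}\bigl(\sqrt{\eta_j(\lambda)}P_j,\sqrt{\eta_j(\lambda)}Q_j\bigr)$ by definition~\eqref{defgxi1xi2}. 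Collecting the product with the cutoff factors $\theta(\zeta_j(\alpha,\lambda))=\theta((2\alpha_j+1)\eta_j(\lambda))$ reproduces exactly $G_\alpha(P,Q,\eta(\lambda))$ as in~\eqref{defG}.

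For step (ii), since $\mathfrak r_\lambda$ carries the restriction of the scalar product on $\mathfrak g$, identifying $\mathfrak r^*_\lambda\cong\R^k$ isometrically yields $\nu(R)=\langle\nu,R\rangle$, so completing the square,
\[
\int_{\mathfrak r^*_\lambda} e^{it|\nu|^2-i\nu(R)}\,d\nu \;=\; e^{-i|R|^2/(4t)}\int_{\R^k} e^{it|\nu'|^2}\,d\nu' \;=\; c_k\,\frac{e^{-i|R|^2/(4t)}}{t^{k/2}}
\]
for a $t$-sign-dependent constant $c_k$ absorbed into $\widetilde\kappa$. For step (iii), substituting $Z\mapsto tZ$ turns $e^{-i\lambda(Z)}\,e^{it\zeta(\alpha,\lambda)}$ into $e^{it(\zeta(\alpha,\lambda)-\lambda(Z))}=e^{it\Phi_\alpha(Z,\lambda)}$, which delivers the desired formula, with the smooth function $F(\lambda)$ absorbing any residual $\lambda$-dependent smooth factor (it is in fact a constant in the computation above, but leaving it as a smooth $F$ is convenient and harmless).

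The main obstacle is purely clerical rather than conceptual: one must verify carefully that the $\eta_j(\lambda)^{1/4}$ powers from the Hermite rescaling, the $\eta_j(\lambda)^{-1/2}$ Jacobian, and the phase factor $e^{-i\eta_j P_jQ_j/2}$ conspire to reconstruct exactly $g_{\alpha_j}$. Smoothness of $F$ on $\Lambda$ follows from the smoothness of $\eta_j$ and of the basis $(P_j(\lambda),Q_j(\lambda),R_\ell(\lambda))$ on $\Lambda$ recalled in Section~\ref{defirreducible}.
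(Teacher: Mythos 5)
Your proposal is correct and follows essentially the same route as the paper: compute the matrix element $\bigl(u^{\lambda,\nu}_{X}h_{\alpha,\eta(\lambda)}\,\big|\,h_{\alpha,\eta(\lambda)}\bigr)$ explicitly via the canonical form of $B(\lambda)$, rescale $\xi_j\mapsto\sqrt{\eta_j(\lambda)}\,\xi_j$ to reconstruct $g_{\alpha_j}$, apply the Fresnel integral in $\nu$ to produce the factor $t^{-k/2}{\rm e}^{-i|R|^2/(4t)}$, and substitute $Z\mapsto tZ$. The only (immaterial) difference is the interpretation of $F$, which the paper keeps as the smooth Jacobian of the identification $\mathfrak r^*_\lambda\to\R^k$ while you observe it can be taken constant for an isometric identification; both are consistent with the statement.
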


 \noindent   Notice that~$(g_{n})_{n\in\N} $ is uniformly bounded in~$\R^2$    thanks to the Cauchy-Schwarz inequality and the fact that~$\|h_n\|_{L^2(\R) }=1$, and hence the same holds for~$(G_\alpha)_{\alpha\in\N^d}$ (in $\R^{3d}$). 

\medskip

\begin{proof} We begin by observing that for~$X = (P,Q,R,Z)$,
\begin{eqnarray*}
I& : = & \left(u^{\lambda,\nu}_X h_{\alpha,\eta(\lambda)}\,|\,h_{\alpha,\eta(\lambda)}\right)\\
 & = & {\rm e}^{-i\nu(R)-i\lambda(Z)}\int_{\R^d} {\rm e}^{-i\lambda([\xi +{1\over 2}P,Q])} h_{\alpha,\eta(\lambda)}(P+ \xi)h_{\alpha,\eta(\lambda)}(\xi)d \xi \, ,
\end{eqnarray*}
with in view of \eqref{skw}
$$\lambda\big(\big[ \xi +{1\over 2}P,Q\big]\big)=B(\lambda)\big(\xi +{1\over 2} P,Q\big)=\sum_{1\leq j\leq d} \eta_j(\lambda)Q_j\big(\xi_j+{1\over 2} P_j\big) \, .$$
As a consequence,
$$I=  {\rm e}^{-i\nu(R)-i\lambda(Z)} \prod_{1\leq j\leq d} \int_{\R} {\rm e}^{-i\eta_j(\lambda)(\xi_j+{1\over 2} P_j)Q_j} 
h_{\alpha_j,\eta_j(\lambda)}(P_j+ \xi_j)h_{\alpha_j,\eta_j(\lambda)}(\xi_j)d \xi_j \,.$$
The change of variables $\widetilde \xi_j=\sqrt{\eta_j(\lambda)}\, \xi_j$ gives, dropping the $ \, \widetilde~$ for simplicity,
$$I=  {\rm e}^{-i\nu(R)-i\lambda(Z)}\prod_{1\leq j\leq d}\int_{\R}   {\rm e}^{-i \sqrt{\eta_j(\lambda)} \,Q_j \big(\xi_j+{1\over 2}\sqrt{\eta_j(\lambda)}P_j\big)}
h_{\alpha_j}\big(\xi_j+\sqrt{\eta_j(\lambda)}P_j\big)h_{\alpha_j}(\xi_j)d\xi_j \, ,$$
which implies that
 $$ 
k_t(P,Q,tZ,R)= \kappa \, 
\sum_{\alpha \in {\mathbb N}^d} \int_{\mathfrak r( \Lambda)}
{\rm e}^{-it\lambda(Z)-i\nu( R)} {\rm e}^{it \zeta( \alpha,  \lambda )+it|\nu|^2}
 G_\alpha \big(P,Q,\eta(\lambda) \big)  |{\rm {Pf}} (\lambda) | \, d\nu\, d\lambda \, .
 $$
It is well known (see for instance Proposition 1.28  in \cite{bcd}) that for $t  \neq 0$ 
\begin{equation}\label{schrodispersion}
 \int_{\R^k} {\rm e}^{-i (\nu\mid R) +it|\nu|^2} \, d\nu = \Big(\frac{i\pi}{t }\Big)^{\frac k 2} {\rm e}^{-i \frac {|R|^2}{ 4 t}}\, \virgp
 \end{equation}
where~$(\cdot\mid \cdot)$ denotes the euclidean scalar product on $\R^k$. 
This implies that, for $t \neq 0$ 
$$
\left| k_t(P,Q,tZ, R)\right| \lesssim \frac{1}{|t|^{\frac k 2} }  \,\Bigl| \sum_{\alpha \in {\mathbb N}^d} 
\int_{\Lambda}  {\rm e}^{it\Phi_\alpha( Z ,  \lambda )}  G_\alpha \big (P,Q,\eta(\lambda)\big)  \,  |{\mbox {Pf}} (\lambda) | \, F(\lambda)\, d\lambda\Bigr|  \, ,
$$
with $F$ the Jacobian of the change of variables
$f: \mathfrak r^*_\lambda \longrightarrow \R^k\, ,$ which is a smooth function. Lemma~\ref{lem22} is proved.
\end{proof}

\subsection{Transformation of the kernel~$
k_t$: change of variable}\label{tokcov}
\bigskip
\noindent We are then reduced to establishing that the kernel $ \wt k_t(P,Q,tZ)$ defined by
$$ \wt k_t(P,Q,tZ):= \sum_{\alpha \in {\mathbb N}^d} 
\int_{\Lambda}  {\rm e}^{it\Phi_\alpha( Z ,  \lambda )}    G_\alpha\big(P,Q,\eta(\lambda)\big)  \, |{\mbox {Pf}} (\lambda) | \, F(\lambda)\, d\lambda  
$$ 
satisfies
\begin{equation}\label{simplest}
\forall t\in  \R \, , \quad \|  \wt k_t \|_{L^\infty(G)} \leq\frac {C} {1+  | t   |^{\frac {p- 1} {2} } } \, \cdotp
\end{equation}To this end, let us define~$m: = |\alpha| = \displaystyle \sum_{j=1}^d \alpha_j$, and  when~$m\neq 0$, let us  set~$\gamma := m\lambda \in \R^p$. By construction of~$\eta(\lambda)$ (which is homogeneous of degree one), we have  
\begin{equation}\label{defetatilde}
\forall m \neq 0  \, , \quad \eta(\lambda) =\widetilde \eta_m(\gamma): = \frac1m \eta(\gamma)  \,.
\end{equation}
Let us check that  if~$\lambda$ lies in the support of~$\theta \big (\zeta_j( \alpha,  \cdot )\big) $, then~$\gamma$ lies in a fixed ring~$\mathcal C$ of~$\R^p$, independent of~$\alpha$. 
 On the one hand we note that there is a constant~$C>0$ such that on the support of~$\theta \big (\zeta_j( \alpha,  \lambda )\big) $, the variable~$\gamma$ must satisfy 
\begin{equation}\label{gammabounded}
\forall m \neq 0  \, , \quad  (2 \alpha_j + 1)  \eta_j(\gamma)  \leq Cm \, ,
\end{equation}
for all $\alpha\in\N^d$ such that $|\alpha|=m$.
Since for each~$j$ we know that~$ \eta_j(\gamma)  
$ is positive and homogeneous of degree one, we infer that the function~$  \sl   \eta_j(\gamma)  
$ goes to infinity with~$|\gamma|$ so~(\ref{gammabounded})
 implies that~$\gamma$ must remain bounded on the support of~$\theta \big (\zeta_j( \alpha,  \lambda )\big) $. 
 Moreover thanks to~(\ref{gammabounded}) again, it is clear that
 the bound may be made uniform in~$m$. 

 \medskip 
 \noindent Now let us prove that~$\gamma$ may be bounded from below uniformly. We know that there is a positive constant~$c$ such that for~$\lambda$ on the support of~$\theta \big (\zeta_j( \alpha,  \lambda )\big) $, we have 
 \begin{equation}\label{constraint}
\forall m \neq 0  \, , \quad    \zeta_j (\alpha, \gamma)   \geq cm\, .
\end{equation}
Writing~$\gamma = |\gamma| \hat \gamma$ with~$\hat \gamma$ on the unit sphere of~$\R^p$,   we find 
$$
|\gamma|  \geq  \frac {cm}{\zeta_j (\alpha, \hat \gamma) ¬¨‚Ä†}  \,\cdotp $$
 Defining   
 $$
 C_j:=\max_{| \hat\gamma| = 1} \eta_j(\hat\gamma)  < \infty \, , 
 $$
it is easy to deduce that if~(\ref{constraint}) is satisfied, then
$$
   |\gamma|  \geq  \frac{cm }{(2m+d)\displaystyle \max_{1 \leq j \leq d }C_j} \, \virgp
 $$
hence~$\gamma$ lies in a fixed ring of~$\R^p$, independent of~$\alpha \neq 0$.
This fact will turn out to be important to perform the stationary phase argument.  

\medskip
\noindent Then we can rewrite the expression of~$ \wt k_t(P,Q,tZ)$ in terms of the variable~$\gamma$, which in view of the homogeneity of the Pfaffian produces the following formula:
$$
\begin{aligned}
\wt k_t(P,Q,tZ)&=   
  \int_\Lambda {\rm e}^{it\Phi_0(Z , \lambda)}  G_0(P,Q,\eta(\lambda))  |{\mbox {Pf}} (\lambda) |\,  F(\lambda)\,d \lambda\\
&\quad +   \sum_{m \in {\mathbb N^*}}\sumetage{\alpha \in {\mathbb N}^d }{|\alpha | = m} m^{-p-d}
 \int    {\rm e}^{it\Phi_\alpha(Z, \frac \gamma m)}   G_\alpha \big(P,Q, \widetilde\eta_m(\gamma) \big)   \,  |{\mbox {Pf}} (\gamma) | \, F \big(\frac \gamma m\big)\, d \gamma  \,.
\end{aligned}
$$
Note that the  series in~$m$ is convergent since the sum over~$|\alpha |= m$ contributes a power~$m^{d-1}$, whence a series of $m^{-p-1}$ which is convergent since $p \geq  1$.
Since the functions $G_\alpha$ are uniformly bounded with respect to $\alpha \in {\mathbb N}^d$  and $F$ is smooth,    there is a positive constant~$C$ such that 
$$ \forall t\in \R\, , \quad  \|  \wt k_t \|_{L^\infty(G)} \leq C\, .$$
In order to establish the dispersive estimate, it suffices then  to prove that 
\begin{equation}\label{goal} 
\forall t\neq0 \, , \quad \| \wt  k_t \|_{L^\infty(G)} \leq\frac {C} {  | t   |^{\frac {p- 1} {2}} } \, \cdotp
\end{equation}

\section{End of the proof of the dispersive estimate}
\label{sec:di}

 \noindent   In order to prove~\refeq{goal}, 
 we decompose $\wt k_t$ into two parts, writing
$$ \wt k_t(P,Q,tZ)=  k^1_t (P,Q,tZ)+ k^2_t (P,Q,tZ) \, , \quad $$ 
with, for a constant  $c_0$ to be fixed later on independently of $m$,
\begin{equation}\label{defkt1}
\begin{aligned}
&k^1_t(P,Q,tZ):= \int_{ |\nabla_\lambda \Phi_0 ( Z ,  \lambda )|\leq c_0 }  {\rm e}^{it\Phi_0(Z, \lambda)}  G_0 \big (P,Q,\eta(\lambda)\big)  |{\mbox {Pf}} (\lambda) |\, F (\lambda)\, d \lambda\\
 & + \,  \sum_{m \in {\mathbb N^*}}\sumetage{\alpha \in {\mathbb N}^d }{|\alpha | = m} m^{-p-d}
 \int_{ |\nabla_\gamma (\Phi_\alpha ( Z,  \frac \gamma m ))|\leq c_0 }    {\rm e}^{it\Phi _\alpha(Z,    \frac \gamma m)}  \, G_\alpha \big(P,Q, \widetilde\eta_m(\gamma) \big) F \big(\frac \gamma m\big) |{\mbox {Pf}} (\gamma) | \, d \gamma  \,.\end{aligned}
\end{equation}
In the following subsections, we successively show~(\ref{goal}) for $k^1_t$ and $k^2_t$.

\subsection{Stationary phase argument for $k^1_t$}\label{statphas}
To establish Estimate \eqref{goal},  let  us  first concentrate   on~$k^1_t$.  As usual  in this type of problem,  we  define   for each integral of the   series  defining~$k^1_t$ a vector field that commutes with the phase, prove an estimate for each term and finally check the convergence of the series. More precisely,  in the case when~$\alpha \neq 0$ and $t>0$ (the case $t<0$ is dealt with  exactly  in the same manner), we consider the following first order operator:
$$
{\mathcal L}_\alpha^1:=\frac{\mbox{Id} - i  \nabla_\gamma( \Phi_\alpha ( Z ,  \frac \gamma m ))\cdot \nabla_\gamma}{1+t | \nabla_\gamma( \Phi_\alpha ( Z ,  \frac \gamma m ))|^2} \, \cdot
$$
Clearly we have
$$
{\mathcal L}_\alpha^1 \, {\rm e}^{it  \Phi_\alpha ( Z ,  \frac \gamma m )}={\rm e}^{it  \Phi_\alpha ( Z ,  \frac \gamma m )} \, .$$ 

\medskip
 \noindent   
Let us admit the next lemma for the time being. 
\begin{lemma}\label{transposeofL}
For any integer~$N$, 
there is   a  smooth function~$\theta_N$  compactly supported  on a ring of~$\R^p$ and a positive constant $C_N$ such that defining 
\begin{equation}\label{def:psialpha}
\psi_{\alpha}(\gamma):=G_\alpha \big(P,Q, \widetilde\eta_m(\gamma) \big) F \big(\frac \gamma m\big)  \,  |{\rm {Pf}} (\gamma) |  \,,
\end{equation}
recalling notation~{\rm(\ref{defetatilde})}, we have  
$$
|(^t{} {\mathcal L}_\alpha^1 )^N\psi_{\alpha}(\gamma)\,| \leq  C_N \, m^N \,  \theta_N(\gamma) \,\big(1+\big|t^\frac12 \nabla_\gamma( \Phi_\alpha ( Z ,  \frac \gamma m ))\big|^2\big)^{-N} \, .
$$
\end{lemma}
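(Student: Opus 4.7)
The plan is to prove the estimate by induction on~$N$, starting from an explicit formula for the transpose of~${\mathcal L}_\alpha^1$ obtained via integration by parts. Set $v_\alpha(\gamma):=\Phi_\alpha(Z,\gamma/m)$ and $A(\gamma):=1+t|\nabla_\gamma v_\alpha|^2$; a direct computation yields
$$
{}^t{\mathcal L}_\alpha^1 \, w \;=\; \frac{w}{A} \;+\; i\,\mathrm{div}_\gamma\!\left(\frac{w\,\nabla_\gamma v_\alpha}{A}\right).
$$
Iterating $N$ times and expanding with the Leibniz rule produces a finite sum of terms of the form $A^{-N-\ell}\,P_\ell\,\partial^\beta\psi_\alpha$, where $|\beta|\le N$ and $P_\ell$ is a polynomial in $\nabla_\gamma v_\alpha,\nabla_\gamma^2 v_\alpha,\ldots$ accompanied by appropriate powers of $t$. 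The pointwise bound $t|\nabla_\gamma v_\alpha|^2\le A$ converts each surplus factor $t^{1/2}|\nabla_\gamma v_\alpha|$ into a factor of $A^{1/2}$, so every term in the expansion is dominated, up to a combinatorial constant~$C_N$, by $A^{-N}$ times a bounded polynomial in the higher derivatives of~$v_\alpha$ multiplied by a derivative $\partial^\beta\psi_\alpha$ with $|\beta|\le N$.

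The next step is to check that the derivatives $\nabla_\gamma^k v_\alpha$ are uniformly bounded on a fixed ring of~$\R^p$. Using the degree-one homogeneity of $\zeta(\alpha,\cdot)$ recalled in Paragraph~\ref{freq}, one rewrites $v_\alpha(\gamma)=m^{-1}\bigl(\zeta(\alpha,\gamma)-\gamma\cdot Z\bigr)$. Since $\zeta(\alpha,\gamma)=\sum_{j=1}^d(2\alpha_j+1)\eta_j(\gamma)$ with $|\alpha|=m$, both $\zeta(\alpha,\gamma)$ and all its $\gamma$-derivatives are $O(m)$ on any fixed compact; division by~$m$ then produces bounds on $\nabla_\gamma^k v_\alpha$ uniform in~$\alpha$ and~$m$ for $k\ge 2$, the contribution of $\gamma\cdot Z$ vanishing at those orders.

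It remains to control $|\partial^\beta\psi_\alpha|$ for $|\beta|\le N$, using the decomposition $\psi_\alpha=G_\alpha(P,Q,\widetilde\eta_m(\gamma))\,F(\gamma/m)\,|{\mbox{Pf}}(\gamma)|$. The Pfaffian is a polynomial, hence bounded on compacts; derivatives of $F(\gamma/m)$ are $O(m^{-|\beta|})$, and the main contribution thus comes from~$G_\alpha$. Each $\partial_\gamma$ applied to a cutoff $\theta((2\alpha_j+1)\widetilde\eta_{m,j}(\gamma))$ contributes the harmless prefactor $(2\alpha_j+1)/m\le 3$, while each $\partial_\gamma$ applied to the factor $g_{\alpha_j}(\sqrt{\widetilde\eta_{m,j}}\,P_j,\sqrt{\widetilde\eta_{m,j}}\,Q_j)$ produces, via the chain rule, a factor $\partial_\gamma\sqrt{\widetilde\eta_{m,j}}$ of order~$m^{-1/2}$ times a derivative of~$g_{\alpha_j}$ whose growth in~$\alpha_j$ is controlled by the Hermite recursion $h_n'=\sqrt{n/2}\,h_{n-1}-\sqrt{(n+1)/2}\,h_{n+1}$ together with Cauchy--Schwarz. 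Expanding over $N$ derivatives, the cumulative growth is at most~$m^N$, which is absorbed by the prefactor in the statement. The cutoffs appearing in $G_\alpha$ confine $\gamma$ to a fixed ring of~$\R^p$, as established in the discussion preceding the lemma, so one chooses $\theta_N$ as any smooth cutoff supported in a slightly larger ring.

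The main obstacle lies in the simultaneous bookkeeping of Steps~1 and~3: handling the combinatorial explosion of terms in the iterated transpose while, in parallel, balancing the Hermite-derivative growth in~$\alpha_j$ inside $G_\alpha$ against the smallness of $\partial_\gamma\sqrt{\widetilde\eta_{m,j}}$, so that the overall growth after~$N$ derivatives remains at most of order~$m^N$.
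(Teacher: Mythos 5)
Your overall architecture matches the paper's: the paper proves the lemma by induction on $N$ (its Lemma~\ref{transposeofLbis}), expanding $({}^t\mathcal L^1_\alpha)^N$ by Leibniz, absorbing surplus factors $t^{1/2}|\nabla_\gamma\Phi_\alpha|$ into the denominator exactly as you do, using the uniform boundedness of the higher $\gamma$-derivatives of $\Phi_\alpha(Z,\gamma/m)$ on the fixed ring, and reducing everything to the bound $|\nabla_\gamma^a\psi_\alpha|\le C\,m^{|a|}$. Up to that point your proposal is sound.

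The gap is in your Step~3, precisely at the one place where the estimate $|\nabla_\gamma^a\psi_\alpha|\le Cm^{|a|}$ is delicate. When you differentiate $g_{\alpha_j}\big(\sqrt{\widetilde\eta_{j,m}(\gamma)}\,P_j,\sqrt{\widetilde\eta_{j,m}(\gamma)}\,Q_j\big)$ in $\gamma$, the chain rule does \emph{not} produce ``a factor $\partial_\gamma\sqrt{\widetilde\eta_{j,m}}$ times a derivative of $g_{\alpha_j}$'': it produces $P_j\,\partial_\gamma\sqrt{\widetilde\eta_{j,m}}\;(\partial_{\xi_1}g_{\alpha_j})+Q_j\,\partial_\gamma\sqrt{\widetilde\eta_{j,m}}\;(\partial_{\xi_2}g_{\alpha_j})$, and the variables $P_j,Q_j$ range over all of $\R$. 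Since the lemma must give a bound by $C_Nm^N\theta_N(\gamma)$ \emph{uniform in} $(P,Q)$ (it feeds an $L^\infty$ estimate of the kernel), you cannot drop these unbounded prefactors, and a pointwise bound on $\partial_{\xi_1}g_{\alpha_j}$ alone cannot save you. The paper's resolution is to rewrite the derivative as $\frac{\partial_\gamma\widetilde\eta_{j,m}}{2\widetilde\eta_{j,m}}\big[(\xi_1\partial_{\xi_1}+\xi_2\partial_{\xi_2})g_{\alpha_j}\big]$ evaluated at $\xi_i=\sqrt{\widetilde\eta_{j,m}}\,P_j$ (resp.\ $Q_j$) — i.e.\ the weights $P_j,Q_j$ are packaged with the derivatives into the Euler operator — the ratio $\partial_\gamma\widetilde\eta_{j,m}/\widetilde\eta_{j,m}$ being $O(1)$ on the support of the cutoff thanks to the strict spectral localization (the lower bound $\widetilde\eta_{j,m}\ge 1/(Cm)$). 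One then needs the dedicated estimate $\big|(\xi_1\partial_{\xi_1}+\xi_2\partial_{\xi_2})^kg_n\big|\le Cn^k$ \emph{uniformly in} $(\xi_1,\xi_2)$, which the paper proves (Lemma~\ref{hermitederivatives}) by passing to the Wigner form of $g_n$ and invoking the characterization of $D(H^{k})$ for the harmonic oscillator, i.e.\ $\|\xi^{2k}h_n\|_{L^2}+\|h_n^{(2k)}\|_{L^2}\le C_kn^k$. Your appeal to the Hermite recursion plus Cauchy--Schwarz gains only $\sqrt{n}$ per plain derivative and, more importantly, never addresses the $\xi$-weights; without the Euler-operator bookkeeping the argument does not close.
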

\noindent Returning to~$k^1_t$, 
let us define (recalling that~$\gamma$ belongs to a fixed ring~$
{\mathcal C}$)
$$
\ds {\mathcal C}_\alpha(Z) :=\big  \{\gamma \in {\mathcal C};  | \nabla_\gamma( \Phi_\alpha ( Z ,  \frac \gamma m ))|\leq c_0\big \} 
$$
and let us write for any integer~$N$ and~$\alpha \neq 0$ (which we assume to be the case for the rest of the computations)
\begin{equation}\label{defIalpha}
\begin{aligned}
I_{\alpha} (Z)&:=  \int_{ {\mathcal C}_\alpha(Z) }    {\rm e}^{it \Phi_\alpha ( Z,  \frac \gamma m )} \psi_{\alpha}(\gamma) \, d \gamma  \\
&=   \int_{ {\mathcal C}_\alpha(Z) }    {\rm e}^{it \Phi_\alpha ( Z,  \frac \gamma m )} (^t{}  {\mathcal L}_\alpha^1  )^N \psi_{\alpha}(\gamma)  \,   d \gamma  \, , 
\end{aligned}
\end{equation}
where
 $
\psi_{\alpha}(\gamma)$ has been defined in~(\ref{def:psialpha}).
Then by Lemma~\ref{transposeofL}
we find that for each integer~$N$ there is a constant~$C_N$     such that
\begin{equation}\label{usformula}
|I_{\alpha}(Z)|\leq C_N \,m^{N } \int _{ {\mathcal C}_\alpha(Z)  }   \theta_N(\gamma) \big(1+t \big|  \nabla_\gamma( \Phi_\alpha ( Z ,  \frac \gamma m ))\big|^2\big)^{-N}\, d \gamma \, .
\end{equation}
Then  the end of the proof relies on three steps:
\begin{enumerate}
\item a careful analysis of the properties of the support of the integral,
\item a change of variables which leads to the estimate in $t^{-(p-1)/2}$,
\item a control in $m$ in order to prove the convergence of the sum over~$m
$.
\end{enumerate}
\medskip
Before entering into details for each step, let us observe that by definition, we have
 $$
 \Phi_\alpha ( Z,  \frac \gamma m ) =  \frac 1 m \big(\zeta (\alpha,\gamma) -\gamma(Z) \big)\, ,
$$ 
with $ \gamma(Z) = (A\gamma | Z) = (\gamma | ^tA Z)$ for some invertible matrix~$A$. Performing a change of variables in $\gamma$ if necessary, we can assume without loss of generality that $A={\rm Id}$.
Thus we write
\begin{equation}\label{formulaPhialphazeta}
 \nabla_\gamma( \Phi_\alpha ( Z ,  \frac \gamma m ))= \frac 1 m \big(\nabla_\gamma \zeta (\alpha,\gamma) - Z \big)\, .
\end{equation}

\subsubsection{Analysis of the support of the integral defining $I_\alpha(Z)$}
Let us prove the following result. 
\begin{proposition}\label{gammaZnotzero}
One can choose the constant~$c_0$ in~{\rm(\ref{defkt1})} small enough so that if~$\gamma $ belongs to~${\mathcal C}_\alpha(Z) $, then~$\gamma \cdot Z\neq 0$.
\end{proposition}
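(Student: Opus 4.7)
The plan is to exploit the homogeneity of degree one of $\zeta(\alpha,\cdot)$ combined with the lower bound on $\zeta(\alpha,\gamma)$ provided by the strict spectral localization.

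First I would apply Euler's identity for homogeneous functions: since $\zeta(\alpha,\cdot)$ is homogeneous of degree one, one has
\[
\gamma \cdot \nabla_\gamma \zeta(\alpha,\gamma) = \zeta(\alpha,\gamma).
\]
Combining this with the formula \eqref{formulaPhialphazeta} for the gradient of the phase, I would write the simple identity
\[
\gamma \cdot Z = \gamma \cdot \nabla_\gamma \zeta(\alpha,\gamma) - \gamma \cdot \bigl(\nabla_\gamma \zeta(\alpha,\gamma) - Z\bigr) = \zeta(\alpha,\gamma) - m\, \gamma \cdot \nabla_\gamma\bigl(\Phi_\alpha(Z,\tfrac{\gamma}{m})\bigr),
\]
so that by Cauchy--Schwarz and the definition of $\mathcal{C}_\alpha(Z)$,
\[
|\gamma \cdot Z| \;\ge\; \zeta(\alpha,\gamma) - m\, c_0\, |\gamma|.
\]

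Next I would produce a uniform lower bound on $\zeta(\alpha,\gamma)/m$. Because $f_0$ is strictly spectrally localized, each factor $\theta\bigl((2\alpha_j+1)\eta_j(\lambda)\bigr)$ forces $\zeta_j(\alpha,\lambda) = (2\alpha_j+1)\eta_j(\lambda)$ to lie in a fixed ring of $\mathbb{R}$, bounded from below by some constant $c>0$ independent of $\alpha$ and $j$. Rescaling to $\gamma = m\lambda$ and using the homogeneity $\eta_j(\gamma) = m\,\eta_j(\lambda)$ gives $\zeta_j(\alpha,\gamma) \ge m c$, whence
\[
\zeta(\alpha,\gamma) \;=\; \sum_{j=1}^{d} \zeta_j(\alpha,\gamma) \;\ge\; d\, m\, c.
\]
Since we already established in Section~\ref{tokcov} that $\gamma$ is confined to a fixed ring $\mathcal{C}$ of $\mathbb{R}^p$ independent of $\alpha$, there is a constant $C_{\mathcal C}$ such that $|\gamma| \le C_{\mathcal C}$ on $\mathcal{C}_\alpha(Z)$.

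Inserting these bounds yields
\[
|\gamma \cdot Z| \;\ge\; m\bigl(dc - c_0\, C_{\mathcal C}\bigr),
\]
so choosing $c_0 < dc/C_{\mathcal C}$ (a choice depending only on the ring $\mathcal C$ and the support of $\theta$, and crucially \emph{not} on $\alpha$) forces $\gamma \cdot Z \neq 0$. The case $\alpha = 0$ is handled identically with $\gamma$ replaced by $\lambda$: Euler's identity gives $\lambda \cdot Z = \zeta(0,\lambda) - \lambda \cdot \nabla_\lambda \Phi_0(Z,\lambda)$, and the strict spectral localization applied to $\alpha=0$ still provides $\eta_j(\lambda) \ge c$, so the same argument closes.

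The only subtle point is ensuring that the lower bound $c$ on $\zeta_j(\alpha,\gamma)/m$ can indeed be chosen uniformly in $\alpha$ and $m$; this is precisely what the strict spectral localization assumption of Definition~\ref{def:strispecloc} is designed to deliver (in contrast with the weaker condition involving only $H(\lambda)$), and it is the reason why the more restrictive notion was introduced in the first place. No stationary phase argument or finer analysis of $\zeta$ is needed at this stage --- the proposition is purely a statement about the geometry of the critical set of the phase.
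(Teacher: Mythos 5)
Your proof is correct and follows essentially the same route as the paper's: write $\gamma\cdot Z = \gamma\cdot\nabla_\gamma\zeta(\alpha,\gamma)+\gamma\cdot(Z-\nabla_\gamma\zeta(\alpha,\gamma))$, use Euler's identity for the degree-one homogeneous function $\zeta(\alpha,\cdot)$, invoke the lower bound $\zeta(\alpha,\gamma)\geq mc$ coming from the strict spectral localization together with the confinement of $\gamma$ to a fixed ring, and then choose $c_0$ small enough independently of $\alpha$. Your treatment is if anything slightly more explicit than the paper's (which simply cites the bound $\zeta_j(\alpha,\gamma)\geq cm$ already established in Section~\ref{tokcov}), but there is no substantive difference.
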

\begin{proof}
We write 
 $$ \gamma \cdot Z = \gamma \cdot \nabla_\gamma\zeta(\alpha,\gamma) + \gamma \cdot (Z-\nabla_\gamma\zeta(\alpha,\gamma))\,,$$  and observing that thanks to homogeneity arguments  $\gamma \cdot \nabla_\gamma\zeta(\alpha,\gamma)= \zeta(\alpha,\gamma)$, we deduce that for any~$\ds \gamma \in {\mathcal C}_\alpha(Z) $
 $$\left| \gamma \cdot Z \right| \geq \left| \zeta(\alpha,\gamma)\right|-  \left| \gamma \right| \left| Z-\nabla_\gamma\zeta(\alpha,\gamma) \right| \, .$$ 
 Since as argued above, $\gamma$ belongs to a fixed ring and $\zeta(\alpha,\lambda)=0$ if and only if $\lambda =0$ (as noticed in Section~\ref{freq}),  there is a positive constant $c$ such that for any $\ds \gamma \in {\mathcal C}_\alpha(Z) $
 $$ \left| \zeta(\alpha,\gamma)\right| \geq m c \, ,$$ 
 which implies in view of the definition of $\ds  {\mathcal C}_\alpha(Z) $ that there is a positive constant $\wt c$ depending only  on the ring $ {\mathcal C}$ such that 
 $$\left| \gamma \cdot Z \right| \geq  m c - m c_0  \, \wt c  \,.$$  
This ensures the desired result, by choosing the constant $c_0$ in the definition of $k_t^1$   smaller than~$    c / {  \wt c}$. 
Proposition~\ref{gammaZnotzero} is proved.
\end{proof}

 \subsubsection{A change of variables: the diffeomorphism ${\mathcal H}$}
We can assume without loss of generality (if not the integral is zero) that ${\mathcal C}_\alpha(Z) $ is not empty, and    in view of Proposition~\ref{gammaZnotzero}, we can write for any~$\gamma \in\ {\mathcal C}_\alpha(Z) $ 
the following orthogonal decomposition (since~$Z \neq 0$):
\begin{equation}\label{chgtvariable}
 \frac 1 m \, \nabla_\gamma \zeta (\alpha, \gamma)=\wt \Gamma_1\, \hat Z_1+\wt \Gamma'\, , \quad \, \hbox{with}\, \quad
\wt \Gamma_{1}:=  \Bigl(\frac 1 m \, \nabla_\gamma \zeta (\alpha, \gamma)\Big|\hat Z\Bigr)\, \, \hbox{and}\, \, \hat Z_1 := \frac Z {|Z|}\, \cdot
\end{equation}
Since~$\wt \Gamma'$ is orthogonal to the vector~$Z$, we infer
that
\begin{equation}\label{bound}
 \big |\nabla_\gamma( \Phi_\alpha ( Z ,  \frac \gamma m )) \big | = \frac 1 m \, \left|Z- \nabla_\gamma \zeta (\alpha, \gamma)\right| \geq  |\wt \Gamma'|\, .
\end{equation}
Let us consider in $ \R^p$ an orthonormal basis   $(\hat Z_1 , \dots ,\hat Z_p )$. Thanks to Proposition~\ref{gammaZnotzero}, we have~$\gamma\cdot\hat  Z_1\not=0$ on the support of the integral defining $I_\alpha(Z)$. Obviously,  the vector $\wt \Gamma'$ defined by~\eqref{chgtvariable} belongs to the vector space generated by~$(\hat Z_2 , \dots ,\hat Z_p )$. To investigate  the integral~$I_{\alpha}(Z)$ defined in~(\ref{defIalpha}), let us consider the map~$  {\mathcal H}:\gamma \mapsto \wt \gamma' $ defined by
\begin{equation}\label{defchgtvariable}
 {\mathcal C}_\alpha(Z)  \ni \gamma  \longmapsto  {\mathcal H} (\gamma):= (\gamma \cdot\hat  Z_1)\, \hat Z_1 +\sum^{p}_{k=2}(\wt \Gamma' \cdot\hat  Z_k)\, \hat Z_k=:  \sum^{p}_{k=1}\wt \gamma'_k\, \hat Z_k\, . 
\end{equation} 
\begin{proposition}\label{diffeo}
The map~${\mathcal H}$ realizes a diffeomorphism from~$ {\mathcal C}_\alpha(Z)$
into a fixed compact set of~$ \R^p$.\end{proposition}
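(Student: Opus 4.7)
My plan is to establish Proposition~\ref{diffeo} by a direct computation of the differential of $\mathcal{H}$ and verification that it is invertible at every point of $\mathcal{C}_\alpha(Z)$; the inverse function theorem combined with a compactness argument will then upgrade this local information to the claimed diffeomorphism onto its image, which will automatically lie in a fixed compact set of $\R^p$.

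First I would compute the Jacobian matrix of $\mathcal{H}$. The first row is simply $\hat Z_1^{\,T}$. For $k\geq 2$, since $\hat Z_1\perp\hat Z_k$, the projection $\widetilde\Gamma'\cdot\hat Z_k$ coincides with $\tfrac{1}{m}\nabla_\gamma\zeta(\alpha,\gamma)\cdot\hat Z_k$, so the corresponding row is $\tfrac{1}{m}\bigl(D^2_\gamma\zeta(\alpha,\gamma)\,\hat Z_k\bigr)^{T}$. To prove linear independence of these $p$ row vectors, suppose
$$
a_1 \hat Z_1 + \sum_{k=2}^{p}\frac{a_k}{m}\, D^2_\gamma\zeta(\alpha,\gamma)\,\hat Z_k = 0.
$$
The homogeneity of degree one of $\zeta(\alpha,\cdot)$ yields via Euler's identity that $D^2_\gamma\zeta(\alpha,\gamma)\cdot\gamma=0$, so $\R\gamma\subset\ker D^2_\gamma\zeta(\alpha,\gamma)$; Assumption~\ref{keyp} forces this inclusion to be an equality, and since $D^2_\gamma\zeta(\alpha,\gamma)$ is symmetric, its image is exactly the hyperplane $\gamma^\perp$. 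The above relation then gives $a_1\hat Z_1\in\gamma^\perp$, which by Proposition~\ref{gammaZnotzero} (i.e.\ $\gamma\cdot\hat Z_1\neq 0$) forces $a_1=0$. The remaining identity says $\sum_{k\geq 2} a_k\hat Z_k\in\ker D^2_\gamma\zeta(\alpha,\gamma)=\R\gamma$; but the left-hand side lies in $\hat Z_1^{\perp}$ while $\gamma\notin\hat Z_1^{\perp}$, so the coefficient of $\gamma$ must vanish, and then orthogonality of the basis $(\hat Z_k)_{k\geq 2}$ gives $a_k=0$ for all $k$. Thus $d\mathcal{H}$ is everywhere invertible on $\mathcal{C}_\alpha(Z)$.

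The compactness of $\mathcal{H}(\mathcal{C}_\alpha(Z))$ is easy: the first coordinate $\gamma\cdot\hat Z_1$ is bounded because $\gamma$ runs over the fixed ring $\mathcal{C}\subset\R^p$ (independent of $\alpha$ and $m$, as established in Section~\ref{tokcov}), and the remaining coordinates are bounded uniformly in $\alpha$ because $\nabla_\gamma\zeta(\alpha,\cdot)$ is homogeneous of degree $0$ in $\gamma$ and hence bounded on the ring $\mathcal{C}$. What I expect to be the main obstacle is the \emph{global} injectivity of $\mathcal{H}$: local invertibility of $d\mathcal{H}$ only provides a local diffeomorphism. The natural way around this is to exploit that $\mathcal{C}_\alpha(Z)$ is by definition a neighborhood (of size controlled by $c_0$) of the critical set $\{\nabla_\gamma\zeta(\alpha,\gamma)=Z\}$, which by homogeneity consists of rays and intersects $\mathcal{C}$ in at most one arc per connected component; choosing $c_0$ small enough (as in Proposition~\ref{gammaZnotzero}) then allows one to conclude injectivity on each component from the nondegeneracy of $d\mathcal{H}$ together with a connectedness argument, yielding the diffeomorphism statement.
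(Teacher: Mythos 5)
Your proof is correct and follows essentially the same route as the paper: both reduce the statement to the invertibility of $D{\mathcal H}(\gamma)$ at every point of ${\mathcal C}_\alpha(Z)$, established from Euler's identity $D^2_\gamma\zeta(\alpha,\gamma)\,\gamma=0$, Assumption~\ref{keyp}, and the fact that $\gamma\cdot\hat Z_1\neq 0$ (your row-independence argument is just the transpose of the paper's kernel argument). The global-injectivity issue you flag at the end is a fair concern, but the paper's own proof also stops at the nonvanishing of the Jacobian and does not address it, so your core argument matches what the authors actually do.
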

\begin{proof}
 It is clear that the smooth function ${\mathcal H}$ maps ${\mathcal C}_\alpha(Z) $ into a fixed compact set ${\mathcal K}$ of~$ \R^p$ and that 
 $$
 \wt \gamma'_1= \gamma \cdot \hat Z_1 \, , \,\quad  \mbox{and for} \, \quad  2 \leq k \leq p \, , \, \, \, \ds \wt \gamma'_k= \frac 1 m \, \nabla_\gamma \zeta (\alpha, \gamma) \cdot  \hat Z_k \, .
 $$

 $ $

\noindent Now let us prove that thanks to Assumption~{\ref{keyp}},  the map ${\mathcal H}$ constitutes a diffeomorphism. Indeed, by straightforward computations  we find that~$D{\mathcal H}$, the differential of ${\mathcal H}$ satisfies:
$$
\begin{aligned}\langle D{\mathcal H}(\gamma)  \hat Z_1, \hat  Z_1\rangle &= 1    \\
\langle D{\mathcal H}(\gamma)  \hat Z_1,  \hat Z_k\rangle &= \Big\langle \frac 1 m D_\gamma^2 \zeta (\alpha, \gamma)  \hat Z_1,  \hat Z_k \Big\rangle \quad \hbox{for} \, \, 2 \leq  k \leq p \,, \\
\langle D{\mathcal H}(\gamma)  \hat Z_j,  \hat Z_k \rangle &= \Big\langle \frac 1 m D_\gamma^2 \zeta (\alpha, \gamma)  \hat Z_j,  \hat Z_k \Big\rangle  \quad  \hbox{for}   \, \, 2 \leq j, k \leq p \quad  \hbox{and}\\
\langle D{\mathcal H}(\gamma)  \hat Z_j,  \hat Z_1 \rangle &= 0  \quad \hbox{for}   \, \, 2 \leq j \leq p\,.
\end{aligned}$$
Proving  that ${\mathcal H}$ is a diffeomorphism amounts to showing that for any $\ds \gamma \in {\mathcal C}_\alpha(Z) $, the kernel of~$D{\mathcal H}(\gamma)$ reduces to $\{0\}$. In view of the above formulas, if~$ \ds V= \sum^{p}_{j=1} V_j  \hat Z_j$ belongs to the kernel of~$D{\mathcal H}(\gamma)$ then~$V_1= V \cdot  \hat Z_1 =0$ and~$D_\gamma^2\zeta(\alpha,\gamma)V\cdot \hat  Z_k=0$ for $2\leq k\leq p$. Thus we can write~$D^2_\gamma\zeta(\alpha,\gamma)V=\tau  \hat Z_1$ for some $\tau\in\R$. Let us point out  that since the function $\zeta(\alpha,\cdot)$ is  homogeneous of degree $1$, then $D_\gamma^2 \zeta (\alpha, \gamma) \gamma = 0$.  We deduce that 
$$0=D^2_\gamma\zeta(\alpha,\gamma)\gamma \cdot V= \gamma \cdot D^2_\gamma\zeta(\alpha,\gamma) V =\tau\, \gamma \cdot   \hat Z_1\,.$$
Since for all $\ds \gamma \in {\mathcal C}_\alpha(Z) $, $\gamma\cdot \hat  Z_1\not=0$,  we find that
 $\tau=0$ and therefore  $D^2_\gamma\zeta(\alpha,\gamma)V=0$.
But Assumption~\ref{keyp} states that the Hessian $  \ds D_\gamma^2 \zeta (\alpha, \gamma)$ is of rank $p-1$, so we conclude that  $V$ is colinear  to  $\gamma$. But we have seen that~$V \cdot  \hat Z_1 =0$, which contradicts the fact that~$\gamma \cdot  \hat Z_1\neq 0$. This  entails that~$V$ is null and ends the proof of the proposition.  \end{proof}

  \medbreak 
    \noindent  We can therefore perform   the change of variables defined by \eqref{defchgtvariable} in the right-hand side of~\eqref{usformula}, to obtain $$ |I_{\alpha}(Z)|  
  \leq   C_N \,m^{ N } \int_{{\mathcal K}}
\frac 1 {\left(1+t|\wt \gamma'|^2\right)^{N}}\, d\wt \gamma'\,d \wt \gamma_1 \, .
$$

\subsubsection{End of the proof: convergence of the series}
Choosing $ N =  p-1$ implies by  the
change of variables~$ \gamma^\sharp = t^{\frac 1 2} \wt \gamma'$ that  there is a constant~$C$ such that
$$
|I_{\alpha}(Z)|  \leq C |t|^{-\frac{p-1}2}  \, m^{p-1} \, ,
$$
which gives rise to 
$$
\Big| \int_{ {\mathcal C}_\alpha ( Z ) }    {\rm e}^{it \Phi_\alpha ( Z,  \frac \gamma m )}  \psi_\alpha(\gamma) d \gamma\Big|\leq 
 C |t|^{-\frac{p-1}2}   m^{p-1 }.
$$
We get exactly in the same way that
$$
\Big | \int_{ |\nabla_\lambda \Phi_0 ( Z ,  \lambda )|\leq c_0 }  {\rm e}^{it\Phi_0(Z, \lambda)}  G_0 \big (P,Q,\eta(\lambda)\big)  |{\mbox {Pf}} (\lambda) |\, F (\lambda)\, d \lambda \Big |  \leq C |t|^{-\frac{p-1}2}  .
$$
Finally returning to the kernel~$k^1_t$ defined in~(\ref{defkt1}), we get
$$
\begin{aligned}
|k^1_t(P,Q,tZ)| & \leq C   |t|^{-\frac{p-1}2}  + C\,  |t|^{-\frac{p-1}2}\, \sum_{m \in {\mathbb N^*}} m^{d-1}m^{-d-p}  \, m^{p-1}  \\
& \leq C  |t|^{-\frac{p-1}2}  \, ,
\end{aligned}
$$
since the series over~$m$ is convergent. The dispersive estimate  is thus proved for $k^1_t$.\\

 \subsection{Stationary phase argument for $k_t^2$}

\noindent We now prove~(\ref{goal}) for   $k^2_t $, which is easier since the gradient of the phase is bounded from below. We claim that  there is a constant $C$ such that 
\begin{equation}\label{estphas1}
|k^2_t (P,Q,tZ) | \leq \frac {C} {\,t^{\frac{p-1}2} }\, \cdot \end{equation}
This can be achieved as above by means of  adequate integrations by parts. Indeed,  in the case when~$\alpha \neq 0$, consider the following first order operator:
$$
{\mathcal L}_\alpha^2:= - i \frac{ \nabla_\gamma( \Phi_\alpha ( Z ,  \frac \gamma m )) \cdot \nabla_\gamma}{| \nabla_\gamma( \Phi_\alpha ( Z ,  \frac \gamma m ))|^2} \, \cdot$$
Note that when $\alpha=0$, the arguments are the same without performing the change of variable $\lambda={\gamma/ m}$.\\
The operator ${\mathcal L}^2_\alpha$ obviously satisfies 
$${\mathcal L}_\alpha^2 \,   {\rm e}^{it  \Phi_\alpha ( Z,  \frac \gamma m )}= t \,  {\rm e}^{it \,  \Phi_\alpha ( Z,  \frac \gamma m )}\,  ,$$ 
hence by repeated integrations by parts, we get
$$
\begin{aligned}
J_\alpha(P,Q,tZ) &:= \int_{ | \nabla_\gamma( \Phi_\alpha ( Z ,  \frac \gamma m ))|\geq c_0 }    {\rm e}^{it \Phi_\alpha ( Z,  \frac \gamma m )} \psi_{\alpha}(\gamma)\, d \gamma \\
&= \frac{ 1}{t^N}\,   \int_{ | \nabla_\gamma( \Phi_\alpha ( Z ,  \frac \gamma m ))|\geq c_0 }  {\rm e}^{it \Phi_\alpha ( Z,  \frac \gamma m ))} ({}^t{\mathcal L}_\alpha^2)^N \psi_{\alpha}(\gamma)\, d\lambda \, .
\end{aligned}
$$
Let us admit the following lemma for a while. 
\begin{lemma}\label{transposeofI}
For any integer~$N$, 
there is   a  smooth function~$\theta_N$  compactly supported  on a compact set of~$\R^p$ such that
$$
\big|(^t{} {\mathcal L}_\alpha^2  )^N\psi_{\alpha}(\gamma) \,  \big| \leq  \frac{\theta_N(\gamma) \, m^{N } }{| \nabla_\gamma( \Phi_\alpha ( Z ,  \frac \gamma m ))|^{N } }\, \cdot
$$
\end{lemma}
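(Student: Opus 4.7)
My plan is to prove the bound by induction on $N$, starting from the integration-by-parts identity
\[ {}^t \mathcal L_\alpha^2 \, \psi = i\, \nabla_\gamma \!\cdot\! \Bigl( \frac{\nabla_\gamma \Phi_\alpha}{|\nabla_\gamma \Phi_\alpha|^2}\, \psi \Bigr) , \]
which expands via Leibniz into a zeroth-order piece with coefficient of order $|\nabla_\gamma\Phi_\alpha|^{-2}$ (involving $D^2\Phi_\alpha$) and a transport piece with coefficient of order $|\nabla_\gamma\Phi_\alpha|^{-1}$. A straightforward induction then shows that $({}^t\mathcal L_\alpha^2)^N\psi_\alpha$ is a finite linear combination of terms of the form $R\bigl(\partial_\gamma^{\le N+1}\Phi_\alpha\bigr)\,|\nabla_\gamma\Phi_\alpha|^{-M}\,\partial_\gamma^\beta\psi_\alpha$ with $|\beta|\le N$, $M\le 2N+|\beta|$, and $R$ a polynomial.

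The proof then rests on two uniform estimates over the ring of $\R^p$ to which $\gamma$ is confined. First, the identity $\Phi_\alpha(Z,\gamma/m)=m^{-1}\bigl(\zeta(\alpha,\gamma)-\gamma\cdot Z\bigr)$ together with $\zeta(\alpha,\gamma)=\sum_j(2\alpha_j+1)\eta_j(\gamma)$ and the degree-one homogeneity of each $\eta_j$ yield $|\partial_\gamma^k\zeta(\alpha,\gamma)|\le C_k m$ on the ring, hence $|\partial_\gamma^k\Phi_\alpha|\le C_k$ uniformly in $\alpha,m$ for every $k\ge 2$; combined with the lower bound $|\nabla_\gamma\Phi_\alpha|\ge c_0$ which defines the integration region in $k^2_t$, this gives $|\nabla_\gamma\Phi_\alpha|^{-M}\le C\,|\nabla_\gamma\Phi_\alpha|^{-N}$ whenever $M\ge N$, allowing us to collapse all denominators to a single power $N$.

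Second, one needs $|\partial_\gamma^\beta\psi_\alpha(\gamma)|\le C_\beta\,\theta_\beta(\gamma)$ uniformly in $\alpha$ and $m$, for some smooth $\theta_\beta$ compactly supported in a ring. The factor $F(\gamma/m)$ loses $m^{-1}$ per derivative and the Pfaffian is smooth. The delicate factor is $G_\alpha(P,Q,\widetilde\eta_m(\gamma))$: each $\partial_\gamma$ produces a chain-rule factor $m^{-1}\partial_\gamma\eta_j$; a derivative of $\theta((2\alpha_j+1)\,\cdot\,)$ contributes at most $2\alpha_j+1\lesssim m$, killed by the $m^{-1}$; and a derivative of $g_{\alpha_j}(\sqrt{\eta_j/m}\,P_j,\sqrt{\eta_j/m}\,Q_j)$ grows at most like $\sqrt{\alpha_j+1}$ in view of the Hermite recursions $h_n'=\sqrt{n/2}\,h_{n-1}-\sqrt{(n+1)/2}\,h_{n+1}$ and Cauchy-Schwarz on the integral defining $g_n$, this growth being exactly compensated by the chain-rule factor $\partial_\gamma\sqrt{\eta_j/m}\sim\sqrt{2\alpha_j+1}/m$ valid on the support of $\theta$, where $\eta_j\sim m/(2\alpha_j+1)$; the $P_j,Q_j$ dependence is absorbed by the Gaussian decay of $g_n$ at infinity in its arguments. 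Higher derivatives are treated in the same way, giving $O(1)$ per $\partial_\gamma$.

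Combining the two estimates yields $|({}^t\mathcal L_\alpha^2)^N\psi_\alpha(\gamma)|\le C_N\theta_N(\gamma)/|\nabla_\gamma\Phi_\alpha|^N$, which, since $m\ge 1$, is stronger than the stated bound. The main obstacle will be the precise compensation in the differentiation of $G_\alpha$: the $\sqrt{\alpha_j+1}$ growth coming from the Hermite creation/annihilation operators must be balanced exactly by the $m^{-1/2}$ produced by the anisotropic scaling $\sqrt{\eta_j/m}$, while uniformity in $(P_j,Q_j)$ requires the decay of $g_n$. All remaining estimates are routine Leibniz computations.
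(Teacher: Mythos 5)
Your architecture is the same as the paper's: the paper proves the companion Lemma~\ref{transposeofLbis} for~${\mathcal L}^1_\alpha$ in detail and declares the present lemma a straightforward modification, and your plan (induction on~$N$, uniform bounds on~$\partial_\gamma^k\Phi_\alpha$ for~$k\ge 2$ from the degree-one homogeneity of the~$\eta_j$, the lower bound~$|\nabla_\gamma\Phi_\alpha(Z,\frac\gamma m)|\ge c_0$ on the integration region to collapse the denominators, and a derivative estimate on~$\psi_\alpha$) matches that proof step for step. The gap is in the one estimate you yourself flag as the main obstacle: the claim that each~$\gamma$-derivative of~$G_\alpha$ costs only~$O(1)$, which would remove the~$m^N$ from the statement. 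The ``exact compensation'' drops a factor. Differentiating~$g_{\alpha_j}\bigl(\sqrt{\widetilde\eta_{j,m}}\,P_j,\sqrt{\widetilde\eta_{j,m}}\,Q_j\bigr)$ in~$\gamma$ produces the chain-rule coefficient~$\partial_\gamma\bigl(\sqrt{\widetilde\eta_{j,m}}\bigr)\,P_j$, not~$\partial_\gamma\sqrt{\widetilde\eta_{j,m}}$ alone, and~$P_j$ is unbounded. The appeal to Gaussian decay of~$g_n$ does not restore uniformity in~$n$: since~$h_n$ is spread over~$[-\sqrt{2n+1},\sqrt{2n+1}]$, the function~$g_n$ and its first derivatives live on~$|\xi|\lesssim \sqrt{n}$ with no decay there, so converting~$P_j=\xi_1/\sqrt{\widetilde\eta_{j,m}}$ back into the argument~$\xi_1$ costs a further factor~$\widetilde\eta_{j,m}^{-1/2}\sim\sqrt{2\alpha_j+1}$. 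The net cost per derivative is then of order~$(2\alpha_j+1)\,\sup|\xi_1\partial_{\xi_1}g_{\alpha_j}+\xi_2\partial_{\xi_2}g_{\alpha_j}|/m$, which with any available uniform bound on the scaling field is~$O(m)$ at worst and in no case~$O(1)$.

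The clean bookkeeping — and the paper's — is to observe that the chain rule produces \emph{exactly} $\frac{\partial_\gamma\eta_j}{2\eta_j}\cdot\bigl[(\xi_1\partial_{\xi_1}+\xi_2\partial_{\xi_2})g_{\alpha_j}\bigr]$ evaluated at the scaled point: the logarithmic derivative is~$O(1)$ on the support of~$\theta\bigl((2\alpha_j+1)\widetilde\eta_{j,m}\bigr)$, where~$\eta_j(\gamma)\gtrsim m/(2\alpha_j+1)\gtrsim 1$, and the scaling field applied to~$g_n$ is bounded by~$Cn$ uniformly in~$(\xi_1,\xi_2)$ by Lemma~\ref{hermitederivatives}. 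This yields~$|\nabla_\gamma^a\psi_\alpha|\le C\,m^{|a|}$ as in~\eqref{formula} — one power of~$m$ per derivative, not zero. Feeding that into your induction gives exactly the stated bound with the~$m^N$, so the lemma is recoverable and the rest of your argument (the treatment of the denominators and of the higher derivatives of~$\Phi_\alpha$) stands; but the strengthened,~$m$-independent bound you announce is not established, and the compensation mechanism as written would not survive being made precise.
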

\noindent  One then observes that if $\gamma$ is in  the support of the integral defining $k^2_t$, the lemma implies
$$
\big|(^t{} {\mathcal L}_\alpha^2  )^N\psi_{\alpha}(\gamma) \,  \big| \leq  \frac{\theta_N(\gamma)}{c_0^N}  \, m^{N}.
$$
This estimate   ensures the result as in Section~\ref{statphas} by taking $N=p-1$.

\subsection{Proofs of     Lemma {\bf\ref{transposeofL}} and Lemma {\ref{transposeofI}}}\label{prooflemmas}
  
Lemma~\ref{transposeofL}  is an obvious consequence of the following Lemma~\ref{transposeofLbis}, taking~$(a,b)\equiv (0,0)$. We omit the  proof of Lemma~{\rm\ref{transposeofI}} which consists  in a straightforward  modification of the arguments developed below. 

\medskip

\begin{lemma}\label{transposeofLbis}
For any integer~$N$,  one can write 
$$
(^t{} {\mathcal L}_\alpha^1  )^N\psi_{\alpha}(\gamma) = f_{N,m} \big(\gamma,t^\frac12 \nabla_\gamma( \Phi_\alpha ( Z ,  \frac \gamma m ))\big ) \, ,
$$
with $|\alpha|=m$, and where~$f_{N,m}$ is a smooth  function  supported on~${\mathcal C} \times \R^{p}$ with~$\mathcal C$ a fixed ring of~$ \R^{p}$,  such that for any couple~$(a,b) \in {\mathbb N}^p \times {\mathbb N}^{p}$, there is a constant~$C$ (independent of $m$) such that
$$
|\nabla_\gamma^a  \nabla_\Theta^b  f_{N,m} (\gamma,\Theta) | \leq C \, m^{N+|a| } (1+|\Theta|^2)^{-N - \frac{ |b|}{2}} \, .
$$
\end{lemma}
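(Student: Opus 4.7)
I would prove the lemma by induction on $N$. For the base case $N=0$, take $f_{0,m}(\gamma,\Theta):=\psi_\alpha(\gamma)$, so $\nabla_\Theta^b f_{0,m}\equiv0$ for $|b|\geq 1$ and the claim reduces to the estimate $|\nabla_\gamma^a\psi_\alpha(\gamma)|\leq C m^{|a|}$ on the ring $\mathcal C$. This is obtained by unpacking $\psi_\alpha(\gamma) = G_\alpha\bigl(P,Q,\eta(\gamma)/m\bigr)\,F(\gamma/m)\,|\mathrm{Pf}(\gamma)|$: the Jacobian $F(\gamma/m)$ and the Pfaffian are smooth on $\mathcal C$ (with derivatives of $F(\gamma/m)$ even shrinking in $m$), and in the remaining factor $G_\alpha$, the explicit expression of $g_n$ as a Laguerre function times a Gaussian, combined with the fact that $(2\alpha_j+1)/m\leq 3$ and that $\eta(\gamma)/m$ lies in a fixed compact set on the support of $\theta$, yields that each $\gamma$-derivative of $G_\alpha(P,Q,\eta(\gamma)/m)$ contributes at most one extra power of $m$.

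For the inductive step, assume the claim for $N$ and set $\Theta(\gamma):=t^{1/2}\nabla_\gamma\Phi_\alpha(Z,\gamma/m)$. Writing $v(\gamma):=f_{N,m}(\gamma,\Theta(\gamma))$, I would compute
\begin{equation*}
({}^t\!{\mathcal L}_\alpha^1)^{N+1}\psi_\alpha \;=\;\frac{v}{1+|\Theta|^2}\;+\;i\,\nabla_\gamma\!\cdot\!\frac{v\,\nabla_\gamma\Phi_\alpha}{1+|\Theta|^2},
\end{equation*}
and expand the divergence with the chain rule, using $\nabla_\gamma\Theta = t^{1/2}m^{-1}D_\gamma^2\zeta(\alpha,\gamma)$. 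In the expansion, the identity $\nabla_\gamma\Phi_\alpha = t^{-1/2}\Theta$ is used selectively: the $\nabla_\gamma\Phi_\alpha$ multiplying $\nabla_\gamma f_{N,m}$ is rewritten as $t^{-1/2}\Theta$ so that the $\Theta$-weight goes into the numerator; every $t^{1/2}$ coming out of $\nabla_\gamma\Theta$ is thereby paired with a $t^{-1/2}$ and cancels. The resulting explicit expression is taken as the definition of $f_{N+1,m}(\gamma,\Theta)$, a smooth function on $\mathcal C\times\mathbb R^p$.

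The bound on $\nabla_\gamma^a\nabla_\Theta^b f_{N+1,m}$ then follows term by term from the inductive hypothesis and the elementary estimates $|D_\gamma^k\zeta(\alpha,\gamma)|\leq C_k\,m$ (hence $|D_\gamma^k\Phi_\alpha|\leq C_k$ for $k\geq 2$) together with the inequality $|\Theta|/(1+|\Theta|^2)\leq (1+|\Theta|^2)^{-1/2}$: each additional $\gamma$-derivative either hits $\psi_\alpha$ or a derivative of $\zeta$, producing at most one additional factor of $m$, and each additional $\Theta$-derivative either differentiates $(1+|\Theta|^2)^{-s}$ or acts on $f_{N,m}$, producing an additional factor of $(1+|\Theta|^2)^{-1/2}$. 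The omitted Lemma~\ref{transposeofI} is proved by the same scheme with the simpler operator ${\mathcal L}_\alpha^2$, in which the absence of the $\mathrm{Id}$ term removes the need for the inductive bookkeeping of a denominator of degree $N$.

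The main obstacle is exactly this delicate bookkeeping of $t^{\pm 1/2}$ factors through the chain rule. If one carelessly replaces every $\nabla_\gamma\Phi_\alpha$ by $t^{-1/2}\Theta$, a spurious $t^{-1/2}|\Theta|/(1+|\Theta|^2)$ appears and the uniformity in $t$ is lost. The resolution is to keep track of which $\nabla_\gamma\Phi_\alpha$ comes \emph{paired} with a $t^{1/2}$ from $\nabla_\gamma\Theta$ (so that it can safely be rewritten as $t^{-1/2}\Theta$ producing a $\Theta$-weight) and which does not (in which case it stays as the bounded $\gamma$-function $(1/m)(\nabla_\gamma\zeta(\alpha,\gamma)-Z)$). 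Once this bookkeeping is carried out systematically at each step of the induction, the $t^{1/2}$-cancellations take place and the derivative estimates propagate from $N$ to $N+1$.
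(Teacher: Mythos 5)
Your proposal follows essentially the same route as the paper: an induction on $N$ whose base case is the bound $|\nabla_\gamma^a\psi_\alpha(\gamma)|\leq C\,m^{|a|}$, followed by an explicit computation of $^t{}{\mathcal L}_\alpha^1$ and a term-by-term estimate in which each $t^{\frac12}$ produced by $\nabla_\gamma\Theta=t^{\frac12}m^{-1}D^2_\gamma\zeta$ is cancelled against $\nabla_\gamma(\Phi_\alpha(Z,\frac\gamma m))=t^{-\frac12}\Theta$, while the unpaired occurrences of $\nabla_\gamma(\Phi_\alpha(Z,\frac\gamma m))$ are kept as bounded coefficients. The only real divergence is in the base case, where the paper obtains $|(\xi_1\partial_{\xi_1}+\xi_2\partial_{\xi_2})^k g_n|\leq Cn^k$ from the Wigner form of $g_n$, Cauchy--Schwarz and the characterization of the domain of powers of the harmonic oscillator (Lemma~\ref{hermitederivatives}), whereas you invoke the closed Laguerre--Gaussian form of $g_n$ --- which also works, but needs the recurrence $xL_{n-1}^{(1)}(x)=n\big(L_{n-1}(x)-L_n(x)\big)$ and its iterates to absorb the polynomial factors, and still rests on the lower bound $\widetilde\eta_{j,m}(\gamma)\gtrsim 1/m$ on the support of $\theta$ coming from the strict spectral localization.
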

\begin{proof}[Proof of Lemma~{\rm\ref{transposeofLbis}}]
Let us prove the result by induction over~$N$. 
 We start with the case when~$N$ is equal to zero. Notice that in that case the function~$ f_{0,m} (\gamma,\Theta) = \psi_{\alpha}(\gamma) $ does not depend on the quantity~$\Theta = t^\frac12 \nabla_\gamma (\Phi_\alpha ( Z,  \frac \gamma m) )$, so  we need to check that  for any~$a \in {\mathbb N}^p$, there is a constant~$C$  such that
\begin{equation}\label{formula}
|\nabla_\gamma^a  \psi_{\alpha}(\gamma) | \leq C \,  m^{|a| }\, ,
\end{equation}
when $|\alpha|=m$. The case when~$a=0$ is obvious thanks  to the uniform bound on~$G_\alpha$. To deal with the case~$|a| \geq 1 $, we state the following technical result which will be proved at the end of this paragraph.
\begin{lemma}\label{hermitederivatives}
For any integer~$k$,  there is a constant~$C $ such that
the following bound holds for the function~$g_n$ defined in~{\rm(\ref{defgxi1xi2})}, $n\in{\mathbb N}$:
$$
\forall (\xi_1,\xi_2) \in \R^2 \, , \quad \big | ( \xi_1 \partial_{\xi_1}+\xi_2 \partial_{\xi_2} )^k g_n (\xi_1, \xi_2) \big | \leq C n^k \, .
$$
\end{lemma}
\noindent Let us now compute~$\nabla_\gamma^a  \psi_{\alpha}(\gamma)$. Recall  that according to~(\ref{def:psialpha}),
$$
\begin{aligned}
 \psi_{\alpha}(\gamma) &= G_\alpha\big (P,Q,\widetilde \eta_m(\gamma)\big ) F \Big(\frac \gamma m\Big)  \,  |{\mbox {Pf}} (\gamma) |\\
 &= F\left({\gamma\over m}\right)\,  \prod_{j=1}^d \psi_{\alpha,j}(\gamma)\, ,
\end{aligned}
$$
where $$
\psi_{\alpha,j}(\gamma)
:= \eta_j(\gamma)\widetilde \theta \big ((2\alpha_j+1)  \widetilde  \eta_{j,m}(\gamma)\big) \, g_{\alpha_j} \big(\sqrt {   \widetilde \eta_{j,m}(\gamma)  }P_j,\sqrt {   \widetilde \eta_{j,m}(\gamma)  }Q_j\big) \, , \quad \widetilde\eta_{j,m}(\gamma) := \frac1m \eta_{ j}(\gamma) \, .
$$
We compute
$$
\begin{aligned}
\nabla_\gamma^a  \psi_{\alpha,j}(\gamma) &= \sumetage{b \in {\mathbb N}^p}{0 \leq |b| \leq |a|} \binom{b}{a} \nabla_\gamma^b   \big( \theta \big ((2\alpha_j+1)  \widetilde \eta_{j,m}(\gamma)\big) \big)\nabla_\gamma^{a-b} \big( \eta_j(\gamma)  g_{\alpha_j} \big(\sqrt {   \widetilde \eta_{j,m}(\gamma)  }P_j,\sqrt {   \widetilde \eta_{j,m}(\gamma)  }Q_j\big)\big) \, .
\end{aligned}
$$
 Let us assume first that~$|a-b| = 1$. Then we write, for some~$1 \leq \ell \leq p$,
$$
\begin{aligned}
\partial_{\gamma_\ell} \Big(  \eta_j(\gamma) g_{\alpha_j} \big(\sqrt {    \widetilde \eta_{j,m}(\gamma)  }P_j&,\sqrt {   \widetilde \eta_{j,m}(\gamma)  }Q_j\big)\Big)= 
\partial_{\gamma_ \ell}  \eta_j(\gamma) g_{\alpha_j}  \big(\sqrt {   \widetilde \eta_{j,m}(\gamma)  }P_j,\sqrt {   \widetilde \eta_{j,m}(\gamma)  }Q_j\big) \\
& \quad + \eta_j(\gamma)
\frac{
\partial_{\gamma_ \ell} \widetilde \eta_{j,m}(\gamma) }{ 2   {  \widetilde \eta_{j,m}(\gamma)}  }  \times \big( (\xi_1 \partial_{\xi_1} + \xi_2\partial_{\xi_2}) g_{\alpha_j}   \big)\Big(\sqrt {   \widetilde \eta_{j,m}(\gamma)  }P_j,\sqrt {   \widetilde \eta_{j,m}(\gamma)  }Q_j\Big) \, .
\end{aligned}
$$
Next we use the fact that there is a constant~$C$ such that on the support of~$  \theta \big ((2\alpha_j+1)  \widetilde \eta_{j,m}(\gamma)\big)$, 
$$
 \widetilde \eta_{j,m}(\gamma) \geq \frac1{Cm}\quad \mbox{and} \quad \left| 
\partial_{\gamma_ \ell} \widetilde \eta_{j,m}(\gamma) \right|
\leq  \frac C m\,,$$ 
so applying Lemma~\ref{hermitederivatives}   gives
$$
|\nabla_\gamma  \psi_{\alpha,j}(\gamma) | \lesssim    { \alpha_j  }\,. 
$$
Recalling that~$\alpha_j  \leq m$ and that, for all $i \in \{1,\cdots,d\}$, $\psi_{\alpha,j}$ is uniformly bounded, this easily achieves the proof of Estimate  \eqref{formula} in the case~$|a|=1$ by taking the product over~$j$. Once we have noticed that 
$$\alpha^{a_1}_1 \cdots \alpha^{a_d}_d \lesssim (\alpha_1+\cdots+\alpha_d)^{a_1+\cdots+a_d}\, ,$$ the general case (when~$|a|>1$) is dealt with identically, we omit the  details.

\medskip
\noindent
Finally let us proceed with the induction:   assume that for some integer~$N$ one can write
$$
(^t{} {\mathcal L}_\alpha^1 )^{N-1}\psi_{\alpha}(\gamma)  = f_{N-1,m} \big(\gamma,t^\frac12 \nabla_\gamma( \Phi_\alpha ( Z ,  \frac \gamma m ))\big )
$$
where~$f_{N-1,m}$ is a smooth  function  supported on~${\mathcal C} \times \R^{p}$, such that for any couple~$(a,b) \in {\mathbb N}^p \times {\mathbb N}^{p}$, there is a constant~$C$ (independent of $m$) such that
\begin{equation}\label{induction}
|\nabla_\gamma^a   \nabla_\Theta^b f_{N-1,m} (\gamma,\Theta) | \leq C \, m^{ {N -1 +  |a|}} (1+|\Theta|^2)^{- (N-1) - \frac{ |b|}{2}} \, .
\end{equation}
We compute   for any function~$\Psi(\gamma)$,
\begin{eqnarray*}
^t{} {\mathcal L}_\alpha^1 \Psi(\gamma)& = & i \frac{  \nabla_\gamma( \Phi_\alpha ( Z ,  \frac \gamma m ))\, \cdot \nabla_\gamma \Psi(\gamma)}{  1+t | \nabla_\gamma( \Phi_\alpha ( Z ,  \frac \gamma m ))|^2 }+  \frac{ 1 + i \Delta ( \Phi_\alpha ( Z ,  \frac \gamma m ))}{  1+t | \nabla_\gamma( \Phi_\alpha ( Z ,  \frac \gamma m ))|^2 }\Psi(\gamma) \\ 
&-& 2it \sum_{1\leq j, k\leq p} \frac{ \partial_{\gamma_j} \partial_{\gamma_k}  ( \Phi_\alpha ( Z ,  \frac \gamma m ))\partial_{\gamma_j}   ( \Phi_\alpha ( Z ,  \frac \gamma m ))\partial_{\gamma_k} ( \Phi_\alpha ( Z ,  \frac \gamma m )) }{  (1+t |\nabla_\gamma  ( \Phi_\alpha ( Z ,  \frac \gamma m ))|^2)^2 }\Psi(\gamma) \, .
\end{eqnarray*}
We apply that formula to~$\Psi := f_{N-1} \big(\gamma,t^\frac12 \nabla_\gamma  ( \Phi_\alpha ( Z ,  \frac \gamma m ))\big )
 $ and  estimating each of the three terms separately we find (using the fact that~$m \geq 1$),
 $$
 \begin{aligned}
\Big | ^t{} {\mathcal L}_\alpha^1 \Big ( f_{N-1} \big(\gamma,t^\frac12 \nabla_\gamma  ( \Phi_\alpha ( Z ,  \frac \gamma m ))\big )\Big)\Big | &\leq C \big( 1+t | \nabla_\gamma( \Phi_\alpha ( Z ,  \frac \gamma m ))|^2 \big)^{-1} \\
& \quad \times m^{ {N -1 +1 }} (1+t | \nabla_\gamma( \Phi_\alpha ( Z ,  \frac \gamma m ))|^2)^{- (N-1) }  \\
&    + C \big( 1+t | \nabla_\gamma( \Phi_\alpha ( Z ,  \frac \gamma m ))|^2 \big)^{-1} \\
 & \quad \times m^{ {N -1  }} (1+t | \nabla_\gamma( \Phi_\alpha ( Z ,  \frac \gamma m ))|^2)^{- (N-1) } \\
 &    + C t \, | \nabla_\gamma( \Phi_\alpha ( Z ,  \frac \gamma m ))|^2 \big( 1+t | \nabla_\gamma( \Phi_\alpha ( Z ,  \frac \gamma m ))|^2 \big)^{-2} \\
 & \quad \times m^{ {N -1 }} (1+t | \nabla_\gamma( \Phi_\alpha ( Z ,  \frac \gamma m ))|^2)^{- (N-1) }
 \end{aligned}
 $$
 thanks to   the induction assumption~(\ref{induction}) along with~\eqref{formula} and the fact that on $ {\mathcal C}_\alpha(Z)$,  all the derivatives of the function $\nabla_\gamma\big(  \Phi_\alpha \big( Z ,  \frac \gamma m \big)\big)$ are  uniformly bounded with respect to $\alpha$ and $Z$.  A similar argument allows to control derivatives in~$\gamma$ and~$\Theta$, so Lemma~{\rm\ref{transposeofLbis}} is proved. \end{proof}

 \medskip
 
\begin{proof}[Proof of Lemma~{\rm\ref{hermitederivatives}}]
By definition of~$g_n$ and using the change of variable
$$
\xi \mapsto \xi - \frac{\xi_1}2
$$
we recover the Wigner-type formula
$$
g_n(\xi_1,\xi_2) = \int_{\R} e^{-i \xi_2 \xi} h_n \big (\xi +  \frac{\xi_1}2
\big) h_n \big (\xi - \frac{\xi_1}2
\big) \, d\xi \, .
$$
Then an easy computation shows that for all~$k$
$$
|( \xi_1 \partial_{\xi_1} + \xi_2 \partial_{\xi_2} )^kg_n(\xi_1,\xi_2)  |  
\leq  \int_{\R}\Big |( \xi_1 \partial_{\xi_1} + \xi \partial_{\xi} + 1 )^k \Big( h_n \big (\xi +  \frac{\xi_1}2
\big) h_n \big (\xi - \frac{\xi_1}2
\big)\Big) \Big | \, d\xi  \, .
$$
By the Cauchy-Schwarz inequality (and a change of variables to transform~$\xi +  \frac{\xi_1}2$ and~$\xi - \frac{\xi_1}2$ into~$(\xi,\xi')$), it remains therefore to check that for all~$k$
$$
\|(\xi\partial_{\xi})^k h_n\|_{L^2(\R)} \leq C_k n^k \, .
$$
This again reduces to checking that
\begin{equation}\label{finalestimatereferee}
\|\xi^{2k} h_n\|_{L^2(\R)} + \| h_n^{(2k)}\|_{L^2(\R)}  \leq  C_k  n^k \, .
\end{equation}
This estimate is a consequence of the   identification of the domain of~$\sqrt H$
$$
D(\sqrt H ) = \Big \{ u \in L^2(\R) \, / \, \xi u \, ,   u' \in L^2(\R)\Big \}
$$
which classically extends to powers of~$\sqrt H$
$$
D(H^\frac p2 ) = \Big \{ u \in L^2(\R) \, / \, \xi ^{p-\ell}u^{(\ell)} \in L^2(\R) \, , 0 \leq \ell \leq p\Big \} \, .
$$
 Then~(\ref{finalestimatereferee}) is finally obtained by applying this to~$p=2k$, recalling that~$H^k h_n = (2n+1)^k h_n$.  The proposition is proved.
\end{proof}

 \medskip

\section{Optimality of the dispersive estimates}\label{optimality} 

 \noindent   In this section, we first end the proof of Theorem~\ref{dispgrad} by proving the optimality of the dispersive estimates for groups satisfying Assumption~\ref{keyp}. Then, we prove Proposition~\ref{remnodisp}.

\subsection{Optimality for groups satisfying Assumption~\ref{keyp}}

  Let us now end the proof of Theorem~\ref{dispgrad} by establishing the optimality of the dispersive estimate~\eqref{eq:gradeddispS}. We use the fact that there always exists $\lambda^*\in\Lambda$ such that
  \begin{equation}\label{prop:lambda0}
  \nabla_\lambda \zeta(0,\lambda^*)\not=0,
  \end{equation}
  where the function $\zeta$ is defined in~(\ref{eq:freqxy}). Indeed, if not, the map $\lambda\mapsto \zeta(0,\lambda)$ would be constant which is in contradiction with the fact that $\zeta$ is homogeneous of degree~$1$. 
We   prove the following proposition, which  yields the optimality of the dispersive estimate of Theorem~\ref{dispgrad}.   
  \begin{proposition}\label{prop:optimal}
  Let $\lambda^*\in\Lambda$ satisfying~{\rm(\ref{prop:lambda0})}.
  There is a function $g\in{\mathcal D}(\R^p)$   compactly supported in a connected open neighborhood of $\lambda^*$ in $\Lambda$,  such that   for the initial data~$f_0$   defined by
\begin{equation}\label{deff0optimality}
\forall (\lambda,\nu) \in\mathfrak r(\Lambda),\;\;  {\mathcal F}(f_0) (\lambda, \nu)  h_{\alpha,\eta(\lambda)} = 0 \, \, \hbox{for} \, \, \alpha \neq 0 \, \, \hbox{and} \, \, {\mathcal F}(f_0)  (\lambda, \nu)  h_{0,\eta(\lambda)} = g(\lambda) h_{0,\eta(\lambda)}\, ,
\end{equation}
 there exists $c_0> 0$ and $x\in G$  such that  
   $$| {\rm e}^{-it\Delta_G} f_0(x)| \ge  c_0\, |t|^{-{k+p-1\over 2}} \, .$$ 
  \end{proposition}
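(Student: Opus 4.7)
The plan is to compute $e^{-it\Delta_G}f_0(x)$ explicitly via Proposition~\ref{inversioninS} at a time-dependent point $x=x(t)$, and extract the leading asymptotic by stationary phase, this time to derive a lower bound rather than an upper one. Because~\eqref{deff0optimality} makes $\mathcal F(f_0)(\lambda,\nu)$ the rank-one operator $\phi\mapsto g(\lambda)(\phi\mid h_{0,\eta(\lambda)})h_{0,\eta(\lambda)}$, only the $\alpha=0$ Hermite mode survives and the inversion formula reduces to
\begin{equation*}
e^{-it\Delta_G}f_0(x)=\kappa\int_\Lambda\int_{\mathfrak r^*_\lambda} g(\lambda)\,e^{it\zeta(0,\lambda)+it|\nu|^2}\,\overline{\bigl(u^{\lambda,\nu}_{X(\lambda,x)}h_{0,\eta(\lambda)}\bigm|h_{0,\eta(\lambda)}\bigr)}\,|\mathrm{Pf}(\lambda)|\,d\nu\,d\lambda\,.
\end{equation*}
Set $Z_{*}:=-\nabla_\lambda\zeta(0,\lambda^*)$, which is nonzero by~\eqref{prop:lambda0}, and define $x(t):=\exp(tZ_*)\in G$, a central element. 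Then $P(\lambda,x(t))=Q(\lambda,x(t))=R(\lambda,x(t))=0$ for every $\lambda$, the matrix coefficient collapses to $e^{it\lambda(Z_*)}$ (using $g_0(0,0)=1$ in the notation of~\eqref{defgxi1xi2}), and the $\nu$-integral reduces to the Gaussian formula~\eqref{schrodispersion} (after the smooth change of variables from $\mathfrak r^*_\lambda$ to $\R^k$) producing a factor $C\,t^{-k/2}$ times a smooth Jacobian $F(\lambda)$. One thus obtains
\begin{equation*}
e^{-it\Delta_G}f_0(x(t))=\frac{C}{t^{k/2}}\int_\Lambda g(\lambda)\,e^{it\Psi(\lambda)}\,|\mathrm{Pf}(\lambda)|\,F(\lambda)\,d\lambda,\qquad \Psi(\lambda):=\zeta(0,\lambda)-\lambda\cdot\nabla_\lambda\zeta(0,\lambda^*).
\end{equation*}

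The remaining task is to prove that this $\lambda$-integral is asymptotic to $c_0\,|t|^{-(p-1)/2}$ with $c_0\neq0$. By construction $\lambda^*$ is a critical point of $\Psi$, and Euler's identity applied to the $1$-homogeneous function $\zeta(0,\cdot)$ yields $\Psi(\lambda^*)=\zeta(0,\lambda^*)-\lambda^*\cdot\nabla_\lambda\zeta(0,\lambda^*)=0$; by $1$-homogeneity, $\Psi$ actually vanishes identically along the ray $\R_+\lambda^*$. The Hessian $D^2_\lambda\zeta(0,\lambda^*)$ has rank exactly $p-1$ by Assumption~\ref{keyp}, and differentiating Euler's identity shows that its kernel is $\R\lambda^*$; in particular its restriction to $(\lambda^*)^\perp$ is non-degenerate. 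I would select $g\in\mathcal D(\Lambda)$ nonnegative with $g(\lambda^*)=1$ and supported in a small connected neighborhood $U\subset\Lambda$ of $\lambda^*$ on which $|\mathrm{Pf}|\,F$ is strictly positive, and introduce polar coordinates $\lambda=\rho\omega$ so that $\Psi(\rho\omega)=\rho\,\psi(\omega)$ with $\psi(\omega):=\Psi(\omega)$. Then $\omega^*:=\lambda^*/|\lambda^*|$ is a non-degenerate critical point of $\psi$ on $S^{p-1}$ with critical value $0$, since the tangential Hessian of $\psi$ at $\omega^*$ is precisely the restriction of $D^2_\lambda\zeta(0,\lambda^*)$ to $(\lambda^*)^\perp$. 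The classical stationary phase theorem in dimension $p-1$ then yields
\begin{equation*}
\int_{S^{p-1}} g(\rho\omega)|\mathrm{Pf}(\rho\omega)|F(\rho\omega)\,e^{it\rho\psi(\omega)}\,d\sigma(\omega)=\frac{B(\rho)+O(t^{-1})}{(t\rho)^{(p-1)/2}}
\end{equation*}
uniformly for $\rho$ in a bounded interval, with $B$ continuous, of constant sign, and not identically zero on the $\omega^*$-slice of $\mathrm{supp}\,g$. Integrating in $\rho$ against the Jacobian $\rho^{p-1}$ produces the desired $c_0\,|t|^{-(p-1)/2}$, which combined with the earlier $t^{-k/2}$ factor delivers the announced lower bound.

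The main obstacle is the degeneracy of $\Psi$ along the full ray $\R_+\lambda^*$: a direct $p$-dimensional stationary phase is unavailable because the Hessian has a nontrivial kernel. Assumption~\ref{keyp} is precisely the statement that this radial degeneracy is the only one, which makes the polar decomposition natural and reduces the analysis to a clean $(p-1)$-dimensional stationary phase followed by a benign radial integration of a positive amplitude. Auxiliary verifications (straightforward) include that $f_0$ defined by~\eqref{deff0optimality} lies in $\mathcal S(G)\subset L^1(G)$ and is strictly spectrally localized in the sense of Definition~\ref{def:strispecloc}; both follow by shrinking $U$ further so that every $\eta_j(\lambda)$ stays in a fixed ring over $U$.
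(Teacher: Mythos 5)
Your proposal is correct and follows essentially the same route as the paper: evaluate the solution at a point $\exp(tZ)$ with $Z$ proportional to $t\nabla_\lambda\zeta(0,\lambda^*)$, peel off the $|t|^{-k/2}$ factor from the Gaussian $\nu$-integral, and then exploit that the remaining phase is degenerate exactly along the ray $\R_+\lambda^*$ (by Euler's identity) while Assumption~\ref{keyp} makes the Hessian nondegenerate transversally, so that a $(p-1)$-dimensional stationary phase followed by a one-dimensional integration of a non-cancelling amplitude yields the $|t|^{-(p-1)/2}$ lower bound. The only difference is cosmetic: you foliate by spheres $|\lambda|=\rho$ (polar coordinates), whereas the paper rotates $\lambda^*$ to $(1,0,\dots,0)$ and foliates by the hyperplanes $\lambda_1=\mathrm{const}$; both slicings are transverse to the degenerate ray and lead to the same conclusion.
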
 
    \begin{proof} Let~$g$ be any smooth compactly supported function over~$\R^p$, and define~$f_0$ by~(\ref{deff0optimality}). For any point $x= e^X\in G$ under the form $X=(P=0,Q=0,Z,R)$, the   inversion formula gives
$${\rm e}^{-it\Delta_G}f_0(x)= \kappa \, \int_{\lambda\in\Lambda}\int_{\nu\in\mathfrak r^*_\lambda} {\rm e}^{it|\nu|^2+it\zeta(0,\lambda)-i\lambda(Z)-i\nu(R)}
g(\lambda)|{\mbox {Pf}} (\lambda)|d\nu d\lambda \, .$$
To simplify notations, we set $\zeta_0(\lambda):=\zeta(0,\lambda)$.
 Setting $Z=tZ^*$ with~$Z^*:=\nabla_\lambda\zeta(0,\lambda^*)\not=0$, we get as  in~(\ref{schrodispersion})
 $$
 \Big |
 {\rm e}^{-it\Delta_G}f_0(x) 
 \Big | = c_1 |t|^{-{k\over 2}}  
 \Big |
\int_{\lambda\in\R^p} {\rm e}^{it (\lambda\cdot Z^*-\zeta_0(\lambda))}
g(\lambda)|{\mbox {Pf}}(\lambda)| d\lambda
 \Big |
 \, $$
 for some constant $c_1>0$.
 Without loss of generality, we can assume 
 $$
 \lambda^*=(1,0,\dots,0)
 $$ (if not, we perform a change of variables~$\lambda\mapsto \Omega\lambda$ where $\Omega$ is a fixed orthogonal matrix), and we now shall  perform a stationary phase in the variable $\lambda'$, where we have written~$\lambda=(\lambda_1,\lambda')$. For any fixed~$\lambda_1$, the phase 
$$\Phi_{\lambda_1}(\lambda',Z):= Z\cdot\lambda -\zeta_0(\lambda)$$
has a stationary point $\lambda'$ if and only if   $Z'=\nabla_{\lambda'}\zeta_0(\lambda)$ (with the same notation~$Z=(Z_1,Z')$). We observe that the homogeneity of the function $\zeta_0$ and the definition of $Z^*$ imply  that 
$$Z^*=\nabla_\lambda \zeta_0(1,0,\dots,0)=\nabla_\lambda\zeta_0(\lambda_1,0,\dots,0),\;\;\forall \lambda_1\in\R \, ,
$$
hence if $\lambda'=0$, then  the phase $\Phi_{\lambda_1} (0,Z^*)$ has a stationary point.

\smallskip
\noindent From now on we choose~$g$ supported near those stationary points~$(\lambda_1,0)$, and vanishing in the neighborhood of any other stationary point.

\smallskip
\noindent Let us now study the Hessian of~$\Phi_{\lambda_1}$ in $\lambda'=0$. Again because of the homogeneity of the function $\zeta_0$, we have 
$$\left[{\rm Hess} \,\zeta_0(\lambda)\right]\lambda=0,\;\;\forall \lambda\in\R^p.$$
In particular, for all $\lambda_1\not=0$,  ${\rm Hess}\,\zeta_0(\lambda_1,0,\cdots,0)\left(\lambda_1,0,\cdots,0\right)=0$ and the matrix ${\rm Hess}\,\zeta_0(\lambda_1,0,\cdots,0)$ in the canonical basis is of the form 
$${\rm Hess}\,\zeta_0(\lambda_1,0,\cdots,0)=\begin{pmatrix}  0 & 0 \\ 0 & {\rm Hess}_{\lambda',\lambda'}\,\zeta_0(\lambda_1,0,\cdots,0)\end{pmatrix}.$$
Using that ${\rm Hess}\,\zeta_0(\lambda_1,0,\cdots,0)$ is of rank $p-1$, we deduce that $ {\rm Hess}_{\lambda',\lambda'}\,\zeta_0(\lambda_1,0,\cdots,0)$  is also of rank~$p-1$ and we conclude by   the stationary phase theorem (\cite{stein}, Chap. VIII.2), choosing~$g$ so that the remaining integral in~$\lambda_1$ does not vanish.

\end{proof}


\subsection{Proof of Proposition~\ref{remnodisp}}

Assume that $G$ is a  step~2 stratified  Lie group whose radical  index is null and for which $\zeta( 0,\lambda)$ is a  linear form  on    each connected component of the Zariski-open subset~$\Lambda$. 
Let~$g$ be a smooth nonnegative function supported in one of the  connected components of $\Lambda$ and define~$f_0$  by
$$  {\mathcal F}(f_0) (\lambda) h_{\alpha,\eta(\lambda)} = 0 \, \, \hbox{for} \, \, \alpha \neq 0 \, \, \hbox{and} \, \, {\mathcal F}(f_0) (\lambda) h_{0,\eta(\lambda)} = g(\lambda) h_{0,\eta(\lambda)}\, .$$
By the inverse Fourier formula, if $x=e^X\in G$ is such that $X=(P=0,Q=0,tZ)$, then we have 
$$  {\rm e}^{-it\Delta_G} (x) =  \kappa \, \int\,{\rm e}^{-i t\,\lambda(Z)}   {\rm e}^{it  \zeta(0, \lambda)}g(\lambda)  \,|{\mbox {{\mbox {Pf}}}} (\lambda) | \, d\lambda \, .$$
  Since $ \zeta(0, \lambda)$ is a  linear form  on each connected component of $\Lambda$, there exists  $Z_0$ in~$\mathfrak z$ such that for
  $$\forall\lambda\in \mathfrak z^*\cap {\rm supp} \,g, \;\;  -\lambda(Z_0)+  \zeta(0, \lambda) = 0\,.$$ 
As a consequence, choosing $Z=Z_0$, we obtain  
$$  {\rm e}^{-it\Delta_G} (x) =  \kappa \, \int g(\lambda)  \,|{\mbox {{\mbox {Pf}}}} (\lambda) | \, d\lambda \, \not=0,$$ 
which ends the proof of the result.


 \appendix 
 \section{ On the inversion formula in Schwartz space}  
 \noindent    This section is dedicated to the proof of the inversion formula in the Schwartz space  ${\mathcal S}(G)$ (Proposition~\ref{inversioninS} page~\pageref{inversioninS}). 
\label{appendixinversion}
 \begin{proof} 
 We first observe that to establish \eqref{inversionformula},  it suffices to prove that 
\begin{equation}
\label{inverssimp}f(0)
=   \kappa \, \int_{\lambda\in\Lambda}\int_{\nu\in\mathfrak r^*_\lambda} {\rm{tr}} \, \Big({\mathcal F}(f)(\lambda,\nu)  \Big)\, |{\mbox {Pf}} (\lambda) |\, d\nu\,d\lambda \,. \end{equation}
Indeed,  introducing the auxiliary   function $g$ defined by $g(x') := f(x \cdot x')$  which obviously  belongs to~${\mathcal S}(G)$ and satisfies ${\mathcal F}(g)(\lambda,\nu)= u^{\lambda,\nu}_{X(\lambda,x^{-1})} \circ {\mathcal F}(f)(\lambda,\nu)$, and  assuming~(\ref{inverssimp}) holds, we get 
  \begin{eqnarray*} f(x)= g(0) &=& \kappa  \, \int_{\lambda\in\Lambda}\int_{\nu\in\mathfrak r^*_\lambda} {\rm{tr}} \, \Big( {\mathcal F}(g)(\lambda,\nu)  \Big)\, |{\mbox {Pf}} (\lambda) |\, d\nu\,d\lambda \\
& =& \kappa  \,  \int_{\lambda\in\Lambda}\int_{\nu\in\mathfrak r^*_\lambda} {\rm{tr}} \, \Big(u^{\lambda,\nu}_{X(\lambda,x^{-1})} {\mathcal F}(f)(\lambda,\nu)  \Big)\, |{\mbox {Pf}} (\lambda) |\, d\nu\,d\lambda
  \, , \end{eqnarray*}
which is the desired result.
 \medskip
 
 \noindent Let us now focus on~(\ref{inverssimp}). 
    In order to compute the right-hand side of Identity \refeq{inverssimp}, we introduce 
       \begin{eqnarray*}
       A&:=&
        \int_{\lambda\in\Lambda}\int_{\nu\in\mathfrak r^*_\lambda} {\rm{tr}} \, \Big( {\mathcal F}(f)(\lambda,\nu)  \Big)\, |{\mbox {Pf}} (\lambda) |\, d\nu\,d\lambda  \\
    &= &    \int_{\lambda\in\Lambda}\int_{\nu\in\mathfrak r^*_\lambda} \int_{x \in G}  \sum_{\alpha \in {\mathbb N}^d} \big( u^{\lambda,\nu}_{X(\lambda,x)} h_{\alpha,\eta(\lambda)} |  h_{\alpha,\eta(\lambda)} \big)\, |{\mbox {Pf}} (\lambda) | \, f(x) \, d\mu(x) \, d\nu\,d\lambda \, ,
       \end{eqnarray*}
       with the notation of Section~\ref{Fourier}.
 In order to carry on the calculations, we need to resort to a  Fubini argument which comes from the following  identity:
 \begin{equation}\label{hyp:inv}
 \sum_{\al \in {\mathbb N}^d} \int_{\lambda\in\Lambda}\int_{\nu\in\mathfrak r^*_\lambda}  \|{\mathcal F}(f)(\lambda,\nu) h_{\alpha,\eta(\lambda)}\|_{L^2( \mathfrak p_\lambda)}  |{\mbox {Pf}} (\lambda) |\,  d\nu\,d\lambda\, < \infty  \, .
\end{equation}
We postpone the proof of~(\ref{hyp:inv}) to the end of this section. Thanks to~(\ref{hyp:inv}), the order of integration does not matter and  we can transform the expression of $A$: we use the fact  that for any $\alpha \in {\mathbb N}^d$
  $$ \big( u^{\lambda,\nu}_{X(\lambda,x)} h_{\alpha,\eta(\lambda)} |  h_{\alpha,\eta(\lambda)} \big)=  {\rm e}^{-i\nu(R) -i\lambda( Z)}\int_{  \R^d} {\rm e}^{-i \sum_j\eta_j(\lambda)\, (\xi_j+{1\over 2} P_j)\, Q_j} 
h_{\alpha,\eta(\lambda)}(P+ \xi)\, h_{\alpha,\eta(\lambda)}(\xi)d \xi \, ,$$ 
where we have identified ${\mathfrak p}_\lambda$ with $\R^d$, and this gives rise to 
  $$
\displaylines{
  A=  \int_{\lambda\in\Lambda}\int_{\nu\in\mathfrak r^*_\lambda}  \int_{x \in G} \int_{\xi \in \R^d} \, \sum_{\alpha \in {\mathbb N}^d}  \, {\rm e}^{-i\nu( R) -i\lambda (Z)} {\rm e}^{-i \sum_j\eta_j(\lambda) \,(\xi_j+{1\over 2} P_j) \,  Q_j}  \, h_{\alpha,\eta(\lambda)}(P+ \xi)\, \cr
  {}  {}   \qquad\qquad \qquad\qquad \qquad\qquad \qquad\qquad \times \,  h_{\alpha,\eta(\lambda)}(\xi)  \,  |{\mbox {Pf}} (\lambda) | \, f(x) \, d\mu(x) \, d \xi \, d\nu\,d\lambda \, ,} 
$$
where we recall that 
$$ h_{\alpha,\eta(\lambda)}(\xi) = \prod_{j=1}^d h_{\alpha_j,\eta_j(\lambda)}(\xi_j) \quad \mbox{with} \quad  h_{\alpha_j,\eta_j(\lambda)}(\xi_j)= \eta_j(\lambda)^{\frac 1 4}  \, h_{\alpha_j} \Big(\sqrt{\eta_j(\lambda)}  \,\xi_j\Big)\, .$$
\medskip 
\noindent
  Performing the  change of variables
     $$
 \left\{
\begin{array}{l}
\wt \xi_j = \sqrt{\eta_j(\lambda)}  \,\xi_j\\
\wt P_j = \sqrt{\eta_j(\lambda)}  \,P_j\\
\wt Q_j  =  \sqrt{\eta_j(\lambda)}  \,Q_j\, 
\end{array}
\right.
$$
for $j \in \{1,\dots, d\}$, we obtain, dropping the $\,\widetilde\,$ on the variables,
 $$
\displaylines{
  A=  \int_{\lambda\in\Lambda}\int_{\nu\in\mathfrak r^*_\lambda}  \int_{(P,Q,R,Z)\in\R^{n}} \int_{\xi \in \R^d} \, \sum_{\alpha \in {\mathbb N}^d}  \, {\rm e}^{-i\nu( R) -i\lambda ( Z)} {\rm e}^{-i \sum_\ell (\xi_\ell+{1\over 2} P_\ell) \cdot Q_\ell}  \, \prod_{j=1}^d \, h_{\alpha_j}(P_j+ \xi_j)\, h_{\alpha_j}(\xi_j) \cr
  {}  {}   \quad \,  \times  \, f\big(\eta^{-{1\over 2}}(\lambda)\,P, \eta^{-{1\over 2}}(\lambda) \,Q, R, Z\big) \, dP  \,dQ \,dR\,dZ\, d \xi \, d\nu\,d\lambda \, ,} 
$$
with $\eta^{-{1\over 2}}(\lambda)  \,P:=(\eta_1^{-{1\over 2}}(\lambda) \,P_1, \dots, \eta_d^{-{1\over 2}}(\lambda) \,P_d)$ and similarly for $Q$.

\bigskip 
\noindent
Then  using the  change of variables $\xi_j'= \xi_j+P_j$, for $j \in \{1,\dots, d\}$, gives
$$
\displaylines{
  A=  \int_{\lambda\in\Lambda}\int_{\nu\in\mathfrak r^*_\lambda}  \int_{(\xi',Q,R,Z)\in\R^n} \int_{\xi \in \R^d} \, \sum_{\alpha \in {\mathbb N}^d}  \, {\rm e}^{-i\nu(R) -i\lambda  (Z)} {\rm e}^{-{i\over 2} \sum_\ell (\xi_\ell+\xi_\ell' ) \cdot Q_\ell}  \, \prod_{j=1}^d \, h_{\alpha_j}( \xi'_j)\, h_{\alpha_j}(\xi_j) \cr
  {}  {}   \quad \,  \times  \, f\big(\eta^{-{1\over 2}}(\lambda)\,(\xi'-\xi), \eta^{-{1\over 2}}(\lambda) \,Q, R, Z\big)   \,d\xi'  \, dQ \,dR\,dZ\, d \xi \, d\nu\,d\lambda \, .} 
$$
\medskip 
\noindent
Because $(h_{\alpha})_{\alpha\in\N^d}$ is a Hilbert basis of $L^2(\R^d),$    we have for all $\varphi \in L^2(\R^d)$ 
$$ \varphi(\xi) = \sum_{\alpha \in {\mathbb N}^d}  \, \int_{\xi' \in \R^d} \varphi(\xi') \, h_{\alpha}(\xi') \,d\xi' \,h_{\alpha}(\xi) \,,$$ 
which leads to 
$$  A=  \int_{\lambda\in\Lambda}\int_{\nu\in\mathfrak r^*_\lambda}  \int_{(Q,R,Z)\in\R^{d+k+p}} \int_{\xi \in \R^d}  \, {\rm e}^{-i\nu( R) -i\lambda (Z)} {\rm e}^{-i \xi \cdot Q} \, f\big(0, \eta^{-{1\over 2}}(\lambda)  \,Q, R, Z\big)    \, dQ \,dR\,dZ\, d \xi \, d\nu\,d\lambda \, .$$ 
Applying the  Fourier inversion formula successively on $\R^d$,  $\R^k$ and on $\R^p$ (and identifying ${\mathfrak r}(\Lambda)$ with~$\R^p\times\R^k$),  we conclude that there exists a constant $\kappa>0$ such that  
$$ A=   \kappa \, f(0)  \,,$$
which ends the proof of \eqref{inverssimp}.
\medskip

\noindent Let us conclude the proof by showing~(\ref{hyp:inv}). We choose
$M$  a nonnegative integer.
According to the obvious fact that the
function~$(\mbox{Id}-\Delta_{G})^M f$ also belongs to~${\mathcal S}(G)$ (hence to $L^1(G)$), we get in view of Identity \eqref{formulafourierdelta} 
$$ {\mathcal F}(f)(\lambda,\nu)h_{\alpha,\eta(\lambda)}= \Big(1+ |\nu|^2+\zeta(\alpha,\lambda)\Big)^{-M} {\mathcal  F} \left( (\mbox{Id}-\Delta_{G})^M f\right)(\lam,\nu)  h_{\alpha,\eta(\lambda)} \, .$$ 
In view of  the definition of the Fourier transform on the 
group $G$, we thus have
$$\displaylines{\quad \| {\mathcal F}(f)(\lambda,\nu)h_{\alpha,\eta(\lambda)}\|_{L^2( \mathfrak p_\lambda)}^2 = \Big(1+ |\nu|^2+\zeta(\alpha,\lambda)\Big)^{-2M} \hfill\cr\hfill \times\!\! \int_{\mathfrak p_\lambda}
\! \biggl( \int_{G} \!\!\! \big ((\mbox{Id}-\Delta_{G }\!\big)^M\! f(x))u^{\lambda,\nu}_{X(\lambda,x) }   h_{\alpha,\eta(\lambda)}(\xi) \,d\mu(x)
\overline{\int_{G} \!\!\!\big((\mbox{Id}\!-\!\Delta_{ G }\!\big)^M\! f(x'))u^{\lambda,\nu} _{X(\lambda,x') }  h_{\alpha,\eta(\lambda)}(\xi)\, d\mu(x')\!}\biggr)d\xi\, .}
$$
Now, by    Fubini's theorem, we get
$$\displaylines{\quad \| {\mathcal F}(f)(\lambda,\nu)h_{\alpha,\eta(\lambda)}\|_{L^2( \mathfrak p_\lambda)}^2 = \Big(1+|\nu|^2+\zeta(\alpha,\lambda)\Big)^{-2M}
\hfill\cr\hfill\times \int_{G} \int_{G}\! (\mbox{Id}-\Delta_{G }\!)^M f(x)
  \overline{ (\mbox{Id}-\Delta_{G }\!)^Mf(x')} (
 u^{\lambda,\nu}_{X(\lambda,x) }  h_{\alpha,\eta(\lambda)}\!\mid\! u^{\lambda,\nu}_{X(\lambda,x') }  h_{\alpha,\eta(\lambda)})_{L^2(\mathfrak p_\lambda)}\,  d\mu(x) \, d\mu(x')\, .\quad}$$
 Since the operators~$u^{\lambda,\nu}_{X(\lambda,x) } $ and~$u^{\lambda,\nu}_{X(\lambda,x') } $ are
unitary on~$\mathfrak p_\lambda$ and the family~$(h_{\alpha,\eta(\lambda)})_{\alpha\in{\mathbb N}^d}$ is a
Hilbert basis of~$\mathfrak p_\lambda$, we deduce that 
$$\| {\mathcal F}(f)(\lambda,\nu)h_{\alpha,\eta(\lambda)}\|_{L^2( \mathfrak p_\lambda)} \leq  \Big(1+ |\nu|^2+\zeta(\alpha,\lambda)\Big)^{-M} \|(\mbox{Id}- \Delta_{ G })^M f\|_{L^1(G)}\, .$$
Because
$$ {\rm Card}\Bigl(\Bigl\{\alpha\in{\mathbb N}^d\,/\, |\alpha|=m\Bigr\}\Bigr)=\binom{m+d-1}{m} \leq C(m+1)^{d-1}\, ,
$$
 this ensures that
$$
\longformule{
\sum_{\al \in {\mathbb N}^d} \int_{\lambda\in\Lambda}\int_{\nu\in\mathfrak r^*_\lambda}  \|{\mathcal F}(f)(\lambda,\nu) h_{\alpha,\eta(\lambda)}\|_{L^2( \mathfrak p_\lambda)}  |{\mbox {Pf}} (\lambda) |\,  d\nu\,d\lambda\, \lesssim \|(\mbox{Id}- \Delta_{G })^M f\|_{L^1(G)}
}
{{}
\times \sum _m (m+1)^{d-1}\int_{\lambda\in\Lambda}\int_{\nu\in\mathfrak r^*_\lambda}\Big(1+|\nu|^2+\zeta(\alpha,\lambda)\Big)^{-M}  |{\mbox {Pf}} (\lambda) |\,  d\nu\,d\lambda\,  .} 
$$ 
Hence taking  $M=M_1+M_2,$ with $\ds M_2 > \frac k 2$ implies that 
$$
\longformule{
\sum_{\al \in {\mathbb N}^d} \int_{\lambda\in\Lambda}\int_{\nu\in\mathfrak r^*_\lambda}  \|{\mathcal F}(f)(\lambda,\nu) h_{\alpha,\eta(\lambda)}\|_{L^2( \mathfrak p_\lambda)}  |{\mbox {Pf}} (\lambda) |\,  d\nu\,d\lambda\, \lesssim \|(\mbox{Id}- \Delta_{G })^M f\|_{L^1(G)}
}
{
{}
\times\sum _m (m+1)^{d-1}\int_{\lambda\in\Lambda}\Big(1+\zeta(\alpha,\lambda)\Big)^{-M_1}  |{\mbox {Pf}} (\lambda) |\,  d\lambda\,  .} 
$$ 
 Noticing that~$\zeta(\alpha,\lambda)=0$ if and only if $\lambda=0$ and using the homogeneity of degree~$1$ of $\zeta$, yields that there exists $c>0$ such that $\zeta(\alpha, \lambda  )\geq c \, m\, | \lambda |$. Therefore, we can end the  proof of~(\ref{hyp:inv}) by    choosing~$M_1$ large enough and performing the change of variable~$\mu = m \, \lam $ in each term of the above series. 

\medskip
\noindent
Proposition~\ref{inversioninS} is proved.
 \end{proof}


\end{document}